\def\newaliasedtheorem#1[#2]#3{
  \newaliascnt{#1@alt}{#2}
  \newtheorem{#1}[#1@alt]{#3}
  \expandafter\newcommand\csname #1@altname\endcsname{#3}
}
\numberwithin{equation}{section}
\newtheoremstyle{slanted}{\topsep}{\topsep}{\slshape}{}{\bfseries}{.}{.5em}{}
\theoremstyle{plain}
\newtheorem{thm}{Theorem}[section]
\theoremstyle{definition}
\theoremstyle{remark}
\providecommand{\keywords}[1]{{\fontsize{10pt}{1em}\textbf{Keywords: } \textit{#1.}}}
\providecommand{\ams}[1]{{\fontsize{10pt}{1em}\textbf{2010 Mathematics Subject Classification: } \textit{#1.}}}
\newcommand{\setR}{\mathbb{R}}
\let\altphi\phi
\let\phi\varphi
\let\varphi\altphi
\let\altphi\undefined
\newcommand{\haus}{\mathcal{H}}
\newcommand{\leb}{\mathcal{L}}
\newcommand{\measrestr}{%
  \,\raisebox{-.127ex}{\reflectbox{\rotatebox[origin=br]{-90}{$\lnot$}}}\,%
}
\DeclareMathOperator{\Tr}{Tr}
\newcommand{\di}{\mathop{}\!\mathrm{d}}
\newcommand{\df}{\vcentcolon =}
\newcommand{\overbar}[1]{\mkern 1.5mu\overline{\mkern-1.5mu#1\mkern-1.5mu}\mkern 1.5mu}
\newcommand{\uunderbar}[1]{\mkern 1.5mu\underline{\mkern-1.5mu#1\mkern-1.5mu}\mkern 1.5mu}
\DeclareMathOperator{\sgn}{sgn}
\newcommand{\dist}{\mathsf{d}}
\newcommand{\meas}{\mu}
\DeclareMathOperator{\vol}{\mathrm{vol}}
\newfont{\tmpf}{cmsy10 scaled 2500}
\begin{document}
\title{Asymptotic Mean Value Laplacian in Metric Measure Spaces}
\author{Andreas Minne
\thanks{KTH Royal Institute of Technology, \url{minne@kth.se}.} \and
David Tewodrose
\thanks{Université de Cergy-Pontoise, \url{david.tewodrose@u-cergy.fr}}} 
\maketitle

\begin{abstract}
We use the mean value property in an asymptotic way to provide a notion of a pointwise Laplacian, called AMV Laplacian, that we study in several contexts including the Heisenberg group and weighted Lebesgue measures. We focus especially on a class of metric measure spaces including intersecting submanifolds of $\setR^n$, a context in which our notion brings new insights; the Kirchhoff law appears as a special case. In the general case, we also prove a maximum and comparison principle, as well as a Green-type identity for a related operator.
\end{abstract}

\begin{adjustwidth}{1cm}{1cm}
\noindent
\keywords{harmonic function, mean value property, metric measure space, maximum principle}
\newline
\newline
\noindent
\ams{Primary: 31C05; Secondary: 35B50, 28E99, 26B15}
\end{adjustwidth}

\tableofcontents

\section{Introduction and Definitions}

Harmonic functions defined on Euclidean domains are well-known to have the mean value property: for any domain $\Omega \subset \setR^n$, a function $u\in C^2(\Omega)$ such that $\Delta u =0 $ (where $\Delta =\sum_{k=1}^n \partial_{kk}$) satisfies
\begin{equation}\label{eq:strongharm}
u(x) = \fint_{B_r(x)} u   
\end{equation}
for any solid ball $B_r(x)\subseteq \Omega$ centered at $x$ with radius $r>0$. We call this the \textit{strong mean value property}. Here, $\fint_{B_r(x)} u$ is defined as the average integral $\leb^n(B_r(x))^{-1}\int_{B_r(x)} u \di \leb^n$, where $\leb^n$ denotes the Lebesgue measure. This celebrated result is credited to Gauss \cite{gauss}. 

The converse implication was first studied by Koebe \cite{koebe}. He proved that if $u$ is continuous in $\Omega$ and satisfies the mean value property on every sphere, then $u$ is harmonic, i.e., $\Delta u=0$. Moreover, Koebe's arguments show that the same statement is true if the mean value property is satisfied at every $x \in \Omega$ only for some radii $\{r_i(x)\}_i$ such that $\inf_i r_i(x)=0$. Later on, Volterra \cite{volterra} for regular domains $\Omega$  and then Kellog \cite{kellogg} for general domains  proved that \eqref{eq:strongharm} is enough for a \emph{single} radius, i.e., a function $u \in C(\overline{\Omega})$ is harmonic if it satisfies
\begin{equation}\label{eq:weakharm}
u(x)=\fint_{B_{r(x)}(x)} u \qquad \forall x \in \Omega,
\end{equation}
where $r$ is a positive function on $\Omega$ with $r(x)<\dist(x,\Omega^c)$. We call this the \textit{weak mean value property}. To sum up, under suitable conditions, we have the following equivalences.
\begin{align*}\label{eq:Eucideanequivalence}
  \text{$u$ is harmonic.}	\iff\quad	&  \text{$u$ has the strong mean value property.}\\
	\iff\quad	&  \text{$u$ has the weak mean value property.}
\end{align*}
For the interested reader, we refer to the exhaustive survey by Netuka and Veselý \cite{MR1321628}, and the work of Llorente \cite{MR3299033}.

Moving from Euclidean domains to a general metric measure space $(X,\dist,\meas)$, the above picture led Gaczkowski and Górka \cite{MR2545982} to study the properties of locally integrable functions $u:X\to\setR$ such that
\begin{equation}\label{eq:strongharmonic}
u(x) = \fint_{B_r(x)} u \di \meas
\end{equation}
for any ball $B_r(x)\subseteq \Omega$ centered at $x$ with radius $r>0$.
Such functions are called \textit{strongly harmonic} by Adamowicz, Gaczkowski and Górka \cite{MR3896674}. Also \textit{weakly harmonic} functions are introduced, namely those locally integrable functions $u:X\to\setR$ satisfying, for every $x$, the mean value property for a single radius denoted $r(x)$:
\begin{equation}\label{eq:weakharmonic}
u(x) = \fint_{B_{r(x)}(x)} u \di \meas.
\end{equation}
It turns out that, in this context in which a pointwise definition of a Laplacian is delicate to set, strongly and weakly harmonic functions share some common properties with harmonic functions on Euclidean domains. For instance, the maximum principle and the Harnack inequality hold under rather general assumptions.

However, weak harmonicity does not imply strong harmonicity \cite[Example 1]{MR3896674}:
\[
\text{$u$ is weakly harmonic.}	\centernot \implies  \text{$u$ is strongly harmonic.}
\]
Moreover, as shown by Bose \cite{MR177128}, there exist functions in $\setR^n$ satisfying a weighted Laplace equation $L_w = 0$ without being strongly harmonic.
%Neither does something corresponding to the notion of harmonicity (see the works by Bose \cite{MR177128}) imply strong harmonicity.
In hindsight this might not be too surprising due to $\mathbb{R}^n$ being homogeneous seen as a medium. For a general measure, local properties (think partial derivatives) might have little to do with macroscopic properties (think strong harmonicity). We will return to this issue in Section \ref{sec:examples}.

In this article, we propose another approach based on the asymptotic fulfillment of the mean value property. This idea has been in the air for some time --- see for instance the works of Manfredi et al.~\cite{MR2566554, MR3177660} and of Burago et al.~\cite{MR3990939} --- but perhaps has not been completely crystallized yet.

We want to point out that during the finalization of this article, it came to our attention via private communication with Adamowicz that he, Kijowski and Soultanis \cite{aks} independently have come up with the same definition. They have investigated several related problems, e.g.~the Hölder regularity of continuous asymptotic mean value harmonic functions, and the dimension of the space of continuous asymptotic mean value harmonic harmonic functions with polynomial growth.

It should be noted that when we refer to a metric measure space $(X,\meas,\dist)$, we always consider a strictly positive metric measure space, i.e. $\meas(\Omega)>0$ for every non-empty open subset $\Omega\subset X$. To be able to discuss pointwise properties of $L^1_{\text{loc}}$ functions, we use the convention of choosing the representative $\tilde{u}$ of $u\in L^1_{\text{loc}}(X,\meas)$ defined at $x$ as the limit of $\fint_{B_r(x)} u(y)\di \meas(y)$ as $r\to 0^+$, whenever the limit exists, which it does $\meas$-a.e.

\begin{defn}[AMV Laplacian]
Let $(X,\dist,\meas)$ be a metric measure space and $u:X\to\overline{\setR}$  be locally integrable. Then the \emph{asymptotic mean value Laplacian} (AMV Laplacian for short) of $u$ is defined as
 $$
 \Delta_\meas^{\dist} u(x)\df \lim\limits_{r \to 0^+} \frac{1}{r^2} \fint_{B_r(x)}u(y) - u(x)\di \meas(y)
 $$
for any $x \in X$ for which the limit exists. We also define for $\meas$-a.e.~$x$
$$
 \Delta_{\mu,r}^{\dist} u(x)\df \frac{1}{r^2} \fint_{B_r(x)}u(y) - u(x)\di \meas(y)
$$
for any $r>0$.
\end{defn}

If $u$ is defined only on a subset $\Omega \subset X$, then we set
$$
 \Delta_{\mu,r}^{\dist} u(x)\df \frac{1}{r^2} \fint_{B_r(x)\cap \Omega}u(y) - u(x)\di \meas(y).
$$
Note that this definition does not require $\Omega$ to be open, only that $\meas(B_r(x)\cap \Omega)>0$ for all positive $r$.

Having a notion of pointwise Laplacian at our disposal, we can define harmonicity in the following manner.

\begin{defn}[AMV Harmonic Function]\label{def:AMVharmonic}
Let $(X,\dist,\mu)$ be a metric measure space and $u\in L^1_{\text{loc}}(X,\meas)$. We say that $u$ is \emph{asymptotically mean value harmonic} (AMV harmonic for short) in $\Omega\subset X$ if $\Delta_\mu^{\dist} u(x) = 0$ for all $x \in \Omega$.
\end{defn}

\begin{rem}\label{rem:subsuper}
 There are evident versions $\overbar{\Delta}_{\mu,r}^{\dist} u(\cdot)$ and $\uunderbar{\Delta}_{\mu,r}^{\dist} u(\cdot)$ in which the limit is replaced by $\limsup$ and $\liminf$, for which one can define asymptotically mean value sub- and superharmonic functions, see Section \ref{sec:maximum}.
\end{rem}

AMV harmonic functions have some advantages over strongly harmonic ones. Indeed, the latter are trivially seen to be AMV harmonic. Moreover, AMV harmonicity comes along with a natural notion of Laplacian, which is absent in the context of strongly harmonic functions. Therefore, it is possible to consider the corresponding Poisson equation, $\Delta_\mu^{\dist} u(x) = f$, heat equation or Helmholtz equation, etc.

A natural question is the relation of the AMV Laplacian with other notions of Laplacians. When $u \in C^2(\setR^n)$, one can easily show that a second order Taylor expansion gives
\begin{equation}\label{eq:euclidean}
\Delta_{\leb^n}^{\dist_e}u = \frac{1}{2(n+2)} \Delta u,
\end{equation}
where $\dist_e$ stands for the Euclidean distance. Using normal coordinates, this calculation can be adapted to the setting of a $C^2$ Riemannian manifold $(M,g)$ in order to prove that for any $u \in C^2(M)$ and any interior point $x \in M$,
\begin{equation}\label{eq:manifold}
\Delta_{\vol_g}^{\dist_g}u(x) = \frac{1}{2(n+2)} \Delta_g u(x),
\end{equation}
where $\dist_g$, $\vol_g$ and $\Delta_g$ are the canonical Riemannian distance, the Riemannian volume measure and the Laplace--Beltrami operator of $(M,g)$ respectively. In Section \ref{sec:examples}, we consider more examples: the Heisenberg group, and the Euclidean space equipped with a weighted Lebesgue measure, showing that in both cases the AMV Laplacian is comparable to the corresponding Laplace operator, namely the Kohn Laplacian and the weighted Laplacian respectively.

More interestingly, our definition permits to deduce some results on spaces which, to the best of our knowledge, has not been proposed yet. We focus especially on metric measure spaces for which the measure has a dimension that depends on the part of the space we are looking at. This is formalized through the notion of a stratified measure, see Section \ref{sec:stratified}. Our results (Theorem \ref{thm:intersection-ahlfors-regular-measures} and Corollary \ref{cor:intersection-ahlfors-regular-measures-nondecreasing}) show that the AMV Laplacian at a point in the intersection of the supports of measures with different Ahlfors dimension only takes into account the \textit{lowest} dimension. Here by Ahlfors dimension we mean a number $Q\ge 0$ such that the measure is $Q$-Ahlfors regular, see Section \ref{sec:stratified} for details. We apply these results in the context of submanifolds of $\setR^n$ intersecting each other, see Corollary \ref{cor:???}. Note that our example in Subsection \ref{euclideandirac}, namely the Euclidean space equipped with the Lebesgue measure plus a Dirac mass, can be seen as a particular case of these stratified spaces.

In Section \ref{sec:maximum} we define AMV sub- and superharmonic functions, and show that the maximum of upper semicontinuous AMV subharmonic functions is attained on the boundary (see Theorem~\ref{thm:weak-maxprin-sub} for precise assumptions). A symmetrical argument goes through for lower semicontinuous AMV superharmonic functions, and a comparison principle is obtained as a corollary.

Finally, we prove a Green-type identity for the operators $\Delta_{\mu,r}^{\dist}$ restricted to a suitable weighted $L^2$ space. This formula suggests to define the weak AMV Laplacian $\Delta_\meas^\dist u$ as the measure $\nu$ such that 
\[
\int_X \phi \di \nu = \lim\limits_{r\to 0^+} \int_X \phi \Delta_{\meas,r}^\dist u   \di \mu
\]
 holds for any $\phi \in C_c(X)$, see Definition~\ref{def:weakAMV}. It is worth pointing out that with this definition, pointwise AMV harmonic functions might fail to be weakly AMV harmonic. An example can be found in the paragraph following Definition~\ref{def:weakAMV}.

\hfill

\smallskip\noindent
\textbf{Acknowledgements.}
 A.~Minne was supported by the Knut and Alice Wallenberg Foundation, as well as Stiftelsen G S Magnusons fond. We are both grateful to Scuola Normale Superiore di Pisa at which most of this work was conducted, and to T.~Adamowicz for his invitation to IMPAN where we had inspiring final discussions with him, A.~Kijowski, and E.~Soultanis.

%Finally, we might ask for regularity properties of the solutions of the Dirichlet problem
%$$
%\Delta_{\mu}^{\dist} u = 0.
%$$
%Let us point out that such solutions might fail to be continuous even in simple situations: for instance, the ``sign'' function
%$$
%u(x)\df
%\begin{cases}
%1 & \text{if $x>0$,}\\
%0 & \text{if $x=0$,}\\
%-1 & \text{if $x<0$}
%\end{cases}
%$$
%is AMV harmonic on $(\setR,\dist_e,\leb^1)$. Therefore, we instead show that some regularity can be expected from $\Delta_{\mu}^\dist u$ given some regularity assumption on $u$. {\color{red} more details to add}

\section{Examples}\label{sec:examples}

In this section, we get some familiarity with the AMV Laplacian by looking at three different examples for which it is possible to do explicit computations.

\subsection{Heisenberg group}

Let $\mathbb{H}$ be the Heisenberg group that we interpret here as $\setR^3$ equipped with the follwing group law:
\[
(x,y,t) \circ (x',y',t') \df (x+x',y+y',t+t'+2(yx' - y'x)) \qquad \forall (x,y,t), (x',y',t') \in \setR^3.
\]
We equip $\mathbb{H}$ with the classical vector fields
\[
X=\frac{\partial}{\partial x} + 2y \frac{\partial}{\partial t}, \qquad Y = \frac{\partial}{\partial y} - 2x \frac{\partial}{\partial t}, \qquad T=\frac{\partial}{\partial t},
\]
which provide a sub-Riemannian structure to $\mathbb{H}$. We denote by $\dist_{CC}$ the associated intrinsic metric, called the Carnot--Carathéodory metric (see e.g.~\cite[5.2]{MR2363343}). We recall that $B_r(p)=p\circ\delta_r(B_1(o))$ for any $p \in \mathbb{H}$ and any $r>0$, where $\delta_r$ is the dilation $(x,y,t) \mapsto (rx,ry,r^2t)$ and $o=(0,0,0)$ is the origin.

For any $p_o=(x_o,y_o,t_o) \in \mathbb{H}$, let $L_{p_o}$ be the left translation $\mathbb{H} \ni p \mapsto p_o \circ p$. Then $L_{p_o}$ is smooth and its Jacobian matrix in $q=(x,y,t)$ is
\[
J_{p_o}(q)=
\begin{pmatrix}
1 & 0 & 0 \\
0 & 1 & 0 \\
-2y_o & 2x_o & 1 
\end{pmatrix};
\]
in particular $|\det(J_{p_o}(q))|=1$.

In the next proposition, we show that for $C^3$ functions the AMV Laplacian on $(\mathbb{H},\dist_{CC},\leb^3)$ coincides with the usual Kohn Laplacian $\Delta_{\mathbb{H}}=X^2 + Y^2$ up to a multiplicative constant. This result is already known from the work of Ferrari et al.~\cite{MR3177660}, but we provide a slightly different proof for the reader's convenience.

\begin{prop}\label{prop:Kohn}
Let $u \in C^3(\mathbb{H})$. Then
\[
\Delta_{\leb^3}^{\dist_{CC}}u=c\Delta_{\mathbb{H}}u,
\]
where $c=\frac{1}{2}\fint_{B_1(o)}x^2 \di \leb^3$.
\end{prop}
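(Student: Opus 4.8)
The plan is to compute the AMV Laplacian directly via a Taylor expansion of $u$ along the group structure, carefully exploiting the dilation scaling of the Carnot--Carathéodory balls. First I would reduce to the origin: since left translation $L_{p_o}$ has unit Jacobian and preserves $\leb^3$, and since $B_r(p_o) = p_o \circ \delta_r(B_1(o))$, the average over $B_r(p_o)$ of $u$ can be rewritten as an average over $\delta_r(B_1(o))$ of the left-translated function $u \circ L_{p_o}$. Thus it suffices to understand the expansion at the origin of a general $C^3$ function and then transfer back; the Kohn Laplacian $\Delta_\mathbb{H} = X^2 + Y^2$ is left-invariant, so this reduction is consistent.

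The key computation is then the following. Setting $v = u \circ L_{p_o}$, I would Taylor expand $v$ to third order at $o$ in the Euclidean coordinates $(x,y,t)$, and change variables in the integral $\fint_{B_r(p_o)} (u(q) - u(p_o)) \di \leb^3(q)$ using $q = p_o \circ \delta_r(\xi)$ with $\xi = (\tilde x, \tilde y, \tilde t) \in B_1(o)$. Under the dilation, the coordinates scale as $x \mapsto r\tilde x$, $y \mapsto r \tilde y$, $t \mapsto r^2 \tilde t$, so the first-order Euclidean term in $x,y$ contributes at order $r$, the $t$-derivative term contributes at order $r^2$, and the second-order terms in $x,y$ contribute at order $r^2$. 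The crucial point is that the linear terms in $\tilde x, \tilde y$ integrate to zero by the symmetry of $B_1(o)$ under $(\tilde x, \tilde y, \tilde t) \mapsto (-\tilde x, -\tilde y, \tilde t)$ (this symmetry follows from the structure of the unit ball), so after dividing by $r^2$ the surviving terms are exactly the $t$-derivative term and the pure second derivatives $\partial_{xx} v$, $\partial_{yy} v$. I would then argue that the mixed second-order terms and the odd pure terms also vanish against the ball's symmetries, leaving $\frac{1}{2}\fint_{B_1(o)} \tilde x^2 \di \leb^3 \cdot (\partial_{xx} v(o) + \partial_{yy} v(o))$ together with the contribution of $\partial_t v(o)\cdot \fint_{B_1(o)} \tilde t \, \di\leb^3$. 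I would note $\fint_{B_1(o)} \tilde t \di \leb^3 = 0$ by symmetry and $\fint_{B_1(o)} \tilde x^2 = \fint_{B_1(o)} \tilde y^2$ by the rotational symmetry of $B_1(o)$ in the $(\tilde x,\tilde y)$-plane.

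Finally I would translate the Euclidean derivatives of $v = u \circ L_{p_o}$ at $o$ back into the horizontal vector fields $X, Y$ evaluated at $p_o$. Because $X$ and $Y$ are precisely the left-invariant extensions of $\partial_x$ and $\partial_y$ at the origin, one has $\partial_{xx} v(o) + \partial_{yy} v(o) = (X^2 u)(p_o) + (Y^2 u)(p_o) = \Delta_\mathbb{H} u(p_o)$; the spurious $\partial_t$-contribution has already been killed by symmetry, which is why no $T$-derivative survives and why $C^3$ regularity (rather than merely $C^2$) is the natural hypothesis — one needs control of the third-order remainder to ensure the $o(r^2)$ error genuinely vanishes after division by $r^2$.

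The main obstacle I expect is the bookkeeping of the dilation-weighted Taylor remainder: because the $t$-coordinate scales like $r^2$ while $x,y$ scale like $r$, the naive Euclidean "third-order is $O(r^3)$" heuristic fails, and one must check term by term that every monomial of anisotropic homogeneous degree strictly greater than $2$ in the dilation grading integrates against a bounded third-derivative remainder to produce a genuine $o(r^2)$ contribution. Verifying that the coordinate symmetries of the (non-explicit) unit ball $B_1(o)$ kill exactly the unwanted terms — in particular the $\tilde x \tilde y$ and linear $\tilde x, \tilde y, \tilde t$ moments — is the delicate step, and I would want to confirm that $\delta_r$ commutes appropriately with these reflections so the symmetry argument is valid at every scale.
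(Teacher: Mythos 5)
Your proposal is correct and follows essentially the same route as the paper: reduce to the origin via the unit-Jacobian left translation, Taylor expand, kill the first-order, $t$-linear and $xy$ moments by the rotational and reflection symmetries of the Carnot--Carath\'eodory ball, and extract the constant $\tfrac12\fint_{B_1(o)}x^2$ from the dilation scaling $B_r(o)=\delta_r(B_1(o))$. The only (harmless) variation is that you use the Euclidean Taylor expansion and then identify $\partial_{xx}v(o)+\partial_{yy}v(o)$ with $(X^2u)(p_o)+(Y^2u)(p_o)$ --- valid since the correction terms in $X,Y$ vanish at the origin --- whereas the paper invokes the stratified Taylor expansion directly in terms of $X$, $Y$, $T$.
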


\begin{proof}
Let $u \in C^3(\mathbb{H})$ and $p_o=(x_o,y_o,t_o) \in \mathbb{H}$. Note that in this proof, all the balls are taken with respect to the Carnot--Carathéodory metric. For any given $r>0$, by the change of variable $p=L_{p_o}(q)$ we have
\begin{align}\label{eq:Heisenberg1}
\Delta_{\leb^3,r}^{\dist_{CC}}u(p_o) = \frac{1}{r^2}\fint_{B_r(p_o)} u(p)-u(p_o)\di p & = \frac{1}{r^2}\fint_{B_r(o)} [u(L_{p_o}(q))-u(p_o)] \left|\det(J_{p_o}(q))\right| \di q\nonumber\\
& = \frac{1}{r^2}\fint_{B_r(o)} [v(q)-v(o)] \di q,
\end{align}
where we have set $v = u\circ L_{p_o}$. Let us write the Taylor expansion of $v$ at $o$ (see \cite[p.~743]{MR2363343}\footnote{It is at this point we need $u\in C^3(\mathbb{H})$.}): for $q=(x,y,t) \in B_r(o)$,
\begin{align*}
v(q) & = v(o)+(X v)(o) x + (Y v)(o) y + (T v)(o) t\\
& + \frac{1}{2} (X^2 v)(o) x^2 + \frac{1}{2} (Y^2 v)(o) y^2 + [2(Tv)(o)+(XYv)(o)]xy + O(r^3).
\end{align*}
Since the vector fields $X$, $Y$ and $T$ commute with left translations,
\begin{align*}
v(q) & = v(o)+(X u)(p_o) x + (Y u)(p_o) y + (T u)(p_o) t\\
& + \frac{1}{2} (X^2 u)(p_o) x^2 + \frac{1}{2} (Y^2 u)(p_o) y^2 + [2(Tu)(p_o)+(XYu)(p_o)]xy + O(r^3).
\end{align*}
Plugging this expression into \eqref{eq:Heisenberg1} and using the fact that $\leb^3(B_r(p_o))=\leb^3(B_r(o))$, we obtain
\begin{align*}
\Delta_{\leb^3,r}^{\dist_{CC}}u(p_o) & = \frac{1}{r^2} \left( (X u)(p_o) \fint_{B_r(o)}x\di q + (Y u)(p_o) \fint_{B_r(o)}y\di q + (T u)(p_o) \fint_{B_r(o)}t\di q \right.\\
& \qquad \quad + \frac{1}{2} (X^2 u)(p_o) \fint_{B_r(o)}x^2\di q \,\, + \, \, \frac{1}{2} (Y^2 u)(p_o) \fint_{B_r(o)}y^2\di q  \\
& \left. \qquad \quad + \, \, [2(Tu)(p_o)+(XYu)(p_o)]\fint_{B_r(o)}xy\di q \right) \, \, + \, \, O(r).
\end{align*}
Now, it is known that a Carnot--Carathéodory ball centered at the origin of $\mathbb{H}$ is symmetric around the $t$-axis and also symmetric with respect to the $xy$-plane, hence
\[
\int_{B_r(o)}x\di q = \int_{B_r(o)}y\di q = \int_{B_r(o)}t\di q = \int_{B_r(o)}xy\di q = 0.
\]
Moreover, a Carnot--Carathéodory ball centered at the origin is invariant under rotations around the $z$-axis, consequently
\[
\int_{B_r(o)}x^2\di q = \int_{B_r(o)}y^2\di q.
\]
Therefore we get
\begin{align*}
\Delta_{\leb^3,r}^{\dist_{CC}}u(p_o) & = \frac{(X^2u)(p_o)+(Y^2u)(p_o)}{2 r^2} \fint_{B_r(o)} x^2 \di q + O(r)\\
& = \Delta_{\mathbb{H}}u(p_o) \frac{1}{2r^2} \fint_{B_r(o)} x^2 \di q + O(r)
\end{align*}
Since $B_r(o)=\delta_r(B_1(o))$ and the Jacobian determinant of $\delta_r$ is constant equal to $r^4$, the change of variable $q=\delta_r(q')$  provides $\leb^3(B_r(o))=r^4 \leb^3(B_1(o))$ and $\int_{B_r(o)}x^2 \di q=r^4 \int_{B_r(o)} (rx)^2 \di q$. Therefore
$$
\Delta_{\leb^3,r}^{\dist_{CC}}u(p_o) = \frac{\Delta_{\mathbb{H}}u(p_o)}{2} \fint_{B_1(o)} x^2 \di q + O(r),
$$
hence the result.
\end{proof}

\subsection{Weighted Lebesgue measures in $\mathbb{R}^n$}

In an earlier work by A.K.~Bose \cite{MR177128}, the author considers weighted Lebesgue measures
\[
\mu= w \leb^n
\]
on connected open sets $\Omega$ in $\setR^n$, where $w \in L^1_{\text{loc}}(\Omega,\leb^n)$ is nonnegative and such that $\mu(B)>0$  
for any ball $B \subset \Omega$. He shows that when $w \in C^1(\Omega)$, any function $u \in L^1_{\text{loc}}(\Omega,\mu)$ which satisfies the mean value property,
\[
u(x) = \fint_{B_r(x)} u \di \mu \qquad \forall B_r(x)\subset \Omega,
\]
is a $C^2$ function and a solution to the partial differential equation
\begin{equation}\label{eq:Bose1}
L_wu\df w \Delta u + 2 \nabla w \cdot \nabla u = 0
\end{equation}
in $\Omega$, hence a harmonic function for the weighted Laplacian $L_w$ (see \cite{kijowski} for the case of a Sobolev regular weight function). The converse is not true \cite{MR177128}: consider the example $w(x,y)=(x+y)^2$ and $u(x,y)=x^2-3xy+y^2$ in $\setR^2$. A direct computation shows that $u$ satisfies \eqref{eq:Bose1}, but
\[
\fint_{B_r(x,y)}u \di \mu = u(x,y) + \frac{r^4}{6(r^2+2(x+y)^2)} \neq u(x,y)
\]
for all $(x,y) \in \setR^2$ and $r>0$. However, with our notation, we have
\[
\Delta_{\mu,r}^{\dist_e} u(x,y) = \frac{r^2}{6(r^2+2(x+y)^2)}
\]
for any $(x,y) \in \setR^2$ and $r>0$, letting $r$ tend to $0$ shows that $u$ is AMV harmonic outside the diagonal $\{x=-y\}$, which coincides with $\{w=0\}$. More generally, we have the following.

\begin{prop}
Let $w \in C^1(\Omega)$ and $u \in C^2(\Omega)$. Then for any $x \in \Omega$ such that $w(x) \neq 0$,
\begin{equation}\label{eq:Bose1.1}
\Delta_{\mu}^{\dist_e}u(x) =c_n
\frac{L_w u (x)}{w(x)}\,,
\end{equation}
where $c_n=2^{-1}(n+2)^{-1}$. Moreover, for any $x \in \Omega$ such that $w(x)=0$, assume that $
b^r(x)\df\frac{1}{r^2}\fint_{B_r(x)} (y-x) w(y)\di y$ and
$ a^r_{ij}(x) \df \frac{1}{2r^2} \fint_{B_r(x)} (y-x)_i (y-x)_j w(y) \di y$ converges  to $b(x) \in \setR^n$ and $a_{ij}(x) \in \setR$ respectively when $r \to 0^+$. Then
\begin{equation}\label{eq:Bose1.2}
\Delta_{\mu}^{\dist_e}u(x) =
\sum_{i,j=1}^n a_{ij}(x)\partial_{ij}u(x) + b(x) \cdot \nabla u(x).
\end{equation}
\end{prop}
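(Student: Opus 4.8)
The plan is to reduce the entire proposition to one algebraic identity produced by a second-order Taylor expansion of $u$ at $x$, and then to treat the two cases as two ways of passing to the limit in that identity. Since $u\in C^2(\Omega)$, I write $u(y)-u(x)=\nabla u(x)\cdot(y-x)+\tfrac12(y-x)^\top D^2u(x)(y-x)+R(y)$ with a remainder $R$ controlled below. Multiplying by $w(y)$, integrating over $B_r(x)$ against $\leb^n$, and dividing by $r^2\mu(B_r(x))$, the first- and second-order moments are exactly $b^r(x)$ and $a^r_{ij}(x)$ by definition (the factor $\tfrac12$ in the definition of $a^r$ absorbing the $\tfrac12$ from Taylor), so that
\[
\Delta_{\mu,r}^{\dist_e}u(x)=b^r(x)\cdot\nabla u(x)+\sum_{i,j=1}^n a^r_{ij}(x)\,\partial_{ij}u(x)+\mathcal E_r(x),
\]
where $\mathcal E_r(x)=\bigl(r^2\mu(B_r(x))\bigr)^{-1}\int_{B_r(x)}R(y)\,w(y)\di y$. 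Everything then hinges on showing $\mathcal E_r(x)\to 0$ and on identifying the limits of $b^r$ and $a^r$.

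For the error term I would use Taylor's theorem with the integral remainder: since $D^2u$ is continuous at $x$, one has $|R(y)|\le \omega(r)\,|y-x|^2$ for $y\in B_r(x)$, where $\omega(r)=\tfrac12\sup_{|\xi-x|\le r}\lVert D^2u(\xi)-D^2u(x)\rVert\to 0$ as $r\to0^+$. The crucial point is that the crude bound $|y-x|\le r$ already suffices: it gives $\int_{B_r(x)}|R|\,w\di y\le \omega(r)\int_{B_r(x)}|y-x|^2 w\di y\le \omega(r)\,r^2\mu(B_r(x))$, whence $|\mathcal E_r(x)|\le\omega(r)\to0$. Because this estimate is normalized by $\mu(B_r(x))$ throughout, it is insensitive to whether $w(x)$ vanishes, so it covers both regimes at once. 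This is the only genuinely analytic step, and it is where I expect the main (though mild) care to be needed; the rest is bookkeeping.

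With $\mathcal E_r(x)\to0$ in hand, the degenerate case $w(x)=0$ is immediate: the hypotheses $b^r(x)\to b(x)$ and $a^r_{ij}(x)\to a_{ij}(x)$ let me pass to the limit $r\to0^+$ in the displayed identity and read off $\Delta_\mu^{\dist_e}u(x)=\sum_{i,j}a_{ij}(x)\partial_{ij}u(x)+b(x)\cdot\nabla u(x)$, which is \eqref{eq:Bose1.2}. For the nondegenerate case $w(x)\neq0$ no such hypotheses are assumed, so I must instead compute the limits of $b^r$ and $a^r$ by hand. Here I would expand $w(y)=w(x)+\nabla w(x)\cdot(y-x)+o(|y-x|)$ (using $w\in C^1$), use that $\mu(B_r(x))=w(x)\,\omega_n r^n(1+o(1))$ since $w(x)\neq0$, and exploit the symmetry of Euclidean balls, for which odd moments vanish and $\int_{B_r(x)}(y-x)_i(y-x)_j\di y=\delta_{ij}\,\omega_n r^{n+2}/(n+2)$. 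This yields $a_{ij}(x)=c_n\delta_{ij}$ with $c_n=\tfrac{1}{2(n+2)}$ and $b(x)=\nabla w(x)/((n+2)w(x))$.

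Substituting these two limits into the identity of the first paragraph gives $\Delta_\mu^{\dist_e}u(x)=c_n\Delta u(x)+\frac{\nabla w(x)\cdot\nabla u(x)}{(n+2)w(x)}=c_n\bigl(\Delta u(x)+\tfrac{2\nabla w(x)\cdot\nabla u(x)}{w(x)}\bigr)=c_n\,L_w u(x)/w(x)$, which is \eqref{eq:Bose1.1} and completes the proof. The whole argument is thus a single Taylor expansion followed by two limit passages; in effect \eqref{eq:Bose1.2} is the general formula, and \eqref{eq:Bose1.1} is recovered from it by computing the moment limits explicitly when $w(x)\neq0$.
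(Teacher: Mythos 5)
Your proof is correct and follows essentially the same route as the paper's: a second-order Taylor expansion of $u$ (plus a first-order one of $w$ in the nondegenerate case), the vanishing of odd moments over Euclidean balls, the identity $\int_{B_r(x)}(y-x)_i(y-x)_j\di y=\delta_{ij}\,\omega_n r^{n+2}/(n+2)$, and $\mu(B_r(x))/\leb^n(B_r(x))\to w(x)$. The only differences are organizational --- you establish the identity behind \eqref{eq:Bose1.2} as a master formula valid at every point and then recover \eqref{eq:Bose1.1} by computing the moment limits when $w(x)\neq 0$, whereas the paper treats the two cases separately --- and your treatment of the Taylor remainder (normalizing by $\mu(B_r(x))$ so the bound is insensitive to whether $w(x)$ vanishes) is somewhat more explicit than the paper's.
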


\begin{proof}
Let $x \in \Omega$ be such that $w(x)\neq 0$. By first and second order Taylor expansions of $w$ and $u$ respectively, we know that there exists a function $E:(0,+\infty) \to (0,+\infty)$ such that $E(r) \to 0$ when $r \to 0^+$ and for any $r>0$ and $y \in B_r(x)$,
\begin{align*}
(u(y)-u(x))w(y) & =w(x)\nabla u (x) \cdot (y-x)+ \frac{w(x)}{2}\nabla^2 u(x) \cdot(y-x,y-x) \\
& + [\nabla u (x) \cdot (y-x)][\nabla w (x) \cdot (y-x)] + E(r) r^2.
\end{align*}
Note that for any $v \in \setR^n$, the antisymmetry of $y \mapsto v \cdot (y-x)$ with respect to the hyperplane $v^{\perp}$ implies that $\int_{B_r(x)}v \cdot (y-x) \di x=0$, and that $\int_{B_r(x)}(y-x)_i(y-x)_j \di y= 0$ for $i\neq j$ and $\int_{B_r(x)}(y-x)_i^2\di y= 2 c_n \leb^n(B_r(x))r^2$ for any $1\le i,j \le n$. From this, a direct computation shows that 
\[
\Delta_{\mu,r}^{\dist} u(x)=c_n \frac{\leb^n(B_r(x))}{\mu(B_r(x))}(w(x)\Delta u(x)+2 \nabla u(x) \cdot \nabla w(x) + o(1)).
\]
Since $w$ is $C^1$, $\mu(B_r(x))/\leb^n(B_r(x))\to w(x)$ as $r \to 0^+$, hence \eqref{eq:Bose1.1}.

Now take $x \in \Omega$ such that $w(x) \neq 0$ and the required assumption is satisfied. By a similar expansion as above, but with respect to $u$ only gives that
\[
\Delta_{\mu,r}^{\dist_e} u(x)= \nabla u(x) \cdot b^r(x) + \sum_{i,j=1}^n \partial_{ij}u(x) a_{ij}^r + o(1),
\]
hence \eqref{eq:Bose1.2}.
\end{proof}

Note that \eqref{eq:Bose1.1} is consistent with the unweighted case $w\equiv 1$. Moreover, in the example $w(x,y)=
(x+y)^2$, explicit calculations from \eqref{eq:Bose1.2} show that for any $u \in C^2(\setR^2)$,
\[
\Delta_{\mu}^{\dist_e}u = \frac{1}{6}(\Delta u  + \partial_{xy}u ) \qquad \text{on $\{x=-y\}$}.
\]
In particular, for $u(x,y)=x^2-3xy+y^2$ we get $\Delta_\mu^{\dist_e}u=1/6$ on $\{x=-y\}$, hence $u$ is not AMV harmonic.

\subsection{The Lebesgue measure with a Dirac mass in $\mathbb{R}^n$}\label{euclideandirac}

%Let $(X,\dist)$ be a metric space equipped with a non-negative regular Borel measure $\meas$. Recall that a point $x \in X$ is called Lebesgue point of a function $f \in L^1_{\text{loc}}(X,\meas)$ if there exists $a \in \setR$ such that $\fint_{B_r (x)} |f(y)-a|^p\di \meas (y) \to 0$ when $r \to 0^+$. The real number $a$ is uniquely determined by this condition and denoted by $f^*(x)$. 

Let us consider $(X,\dist,\mu)\df (\setR^n,\dist_e,\leb^n+\delta_o)$ where $\delta_o$ is a Dirac measure at the origin $o$ of $\setR^n$. Take $u \in L^1_{\text{loc}}(\setR^n,\leb^n)$ and $x \in \setR^n$. If $x\neq o$, we trivially get $\Delta_\mu^{\dist_e}u(x) = \Delta_{\leb^n}^{\dist_e}u(x)$. Consider therefore the case $x =o$. Since $\mu(B_r(o))=1+\omega_n r^n$ (where $\omega_n = \leb^n(B_1)$) for any $r>0$,
\begin{align*}
\Delta_{\mu,r}^{\dist_e}u(o) & = \frac{1}{r^2(1+\omega_n r^n)}\Bigg( \int_{B_r(o)}u(y)-u(x)\di y + \underbrace{\int_{B_r(o)}u(y)-u(o)\di \delta_o(y)}_{=u(o)-u(o)=0} \Bigg)\\
& = \frac{\leb^n(B_r(o))}{r^2(1+\omega_n r^n)}\fint_{B_r(o)} u(y) - u(o) \di y.
\end{align*}
Since $1/(1+\omega_n r^n) = 1 + O(r^n)$, we get
\begin{equation}\label{eq:Leb}
\Delta_{\mu,r}^{\dist_e}u(o) = \omega_n r^{n-2} (1+O(r^n)) \fint_{B_r(o)} u(y) - u(o) \di y.
\end{equation}

This simple computation leads to the following:

\begin{observation}
Assume that $o$ is a Lebesgue point of $u$ with respect to $\leb^n$ and denote by $u^*(o)$ the unique number $a \in \setR$ such that $\fint_{B_r (x)} |u(y)-a|\di y \to 0$ when $r \to 0^+$. If $n=1$, assume additionally that  $r^{-1}\fint_{B_r(o)} u(y) - u(o) \di y$ converges to some constant $b$ as $r \to 0^+$. Then:
\begin{equation}\label{eq:Leb2}
\Delta_{\mu}^{\dist_e}u(o) = 
\begin{cases}
0 &  \text{when $n \ge 3$,}\\ \pi(u^*(o)-u(o)) & \text{when $n =2$},\\
2b & \text{when $n=1$}. 
\end{cases}
\end{equation}
\end{observation}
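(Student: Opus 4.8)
The plan is to specialize the already-derived formula \eqref{eq:Leb}, which expresses $\Delta_{\mu,r}^{\dist_e}u(o)$ as $\omega_n r^{n-2}(1+O(r^n))\fint_{B_r(o)}u(y)-u(o)\di y$, and to analyze the limit as $r\to 0^+$ separately in each dimension regime according to the sign of the exponent $n-2$. The three cases in \eqref{eq:Leb2} correspond exactly to $n-2>0$, $n-2=0$, and $n-2<0$, so the dimension-dependent behavior is entirely governed by the prefactor $r^{n-2}$ competing against the averaged increment $\fint_{B_r(o)}u(y)-u(o)\di y$.

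First, for $n\ge 3$ I would note that since $o$ is a Lebesgue point of $u$, the average $\fint_{B_r(o)}u(y)\di y$ converges to $u^*(o)$, and using the convention that $u(o)$ is chosen as this Lebesgue-point value we have $u(o)=u^*(o)$, so the averaged increment $\fint_{B_r(o)}u(y)-u(o)\di y$ tends to $0$. Combined with $r^{n-2}\to 0$ and $1+O(r^n)\to 1$, the whole expression tends to $0$. For $n=2$ the prefactor $\omega_2 r^{n-2}=\omega_2=\pi$ is constant (recall $\omega_2=\pi$), the error term $1+O(r^2)\to 1$, and the averaged increment again tends to $u^*(o)-u(o)$ by the Lebesgue point property; multiplying gives $\pi(u^*(o)-u(o))$. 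For $n=1$ the prefactor $\omega_1 r^{-1}$ blows up, so convergence is no longer automatic from the Lebesgue point property alone — this is precisely why the extra hypothesis that $r^{-1}\fint_{B_r(o)}u(y)-u(o)\di y\to b$ is imposed. Here $\omega_1=\leb^1(B_1)=2$, so the product becomes $2\cdot b\cdot(1+O(r))\to 2b$.

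The only genuinely delicate point is the interplay between the two occurrences of $u(o)$: the increment $u(y)-u(o)$ inside the integral uses the pointwise value $u(o)$ (the chosen representative), whereas the Lebesgue-point limit naturally produces $u^*(o)$. Under the stated convention these coincide, so for $n\ge 3$ and $n=2$ one may freely replace $u(o)$ by $u^*(o)$ inside the limit; I would make this identification explicit at the outset to avoid any ambiguity. The remaining steps are routine: factor out the error term $1+O(r^n)$, which converges to $1$ and hence does not affect any of the three limits, and then take $r\to 0^+$ in each case. The main obstacle is therefore conceptual rather than computational — recognizing that the three cases are dictated by the sign of $n-2$ and that the $n=1$ case requires an additional first-order assumption because the diverging prefactor $r^{-1}$ must be compensated by a vanishing averaged increment of matching order.
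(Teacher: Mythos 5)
Your case split on the sign of $n-2$ applied to \eqref{eq:Leb} is exactly the (implicit) argument of the paper, and the computations for $n=2$ and $n=1$ are correct, including the constants $\omega_2=\pi$ and $\omega_1=2$. However, the assertion that ``under the stated convention'' one has $u(o)=u^*(o)$ is false in this example and should be removed. The representative convention in the paper is taken with respect to the ambient measure $\mu=\leb^n+\delta_o$, not with respect to $\leb^n$; since $\mu$ has an atom at $o$, one computes $\fint_{B_r(o)}u\di\mu=(1+\omega_nr^n)^{-1}\bigl(u(o)+\int_{B_r(o)}u\di\leb^n\bigr)\to u(o)$ for \emph{any} choice of $u(o)$, so the convention imposes no constraint at the origin and $u(o)$ is a genuine free parameter, independent of the $\leb^n$-Lebesgue value $u^*(o)$. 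If your identification $u(o)=u^*(o)$ were valid, the $n=2$ case would degenerate to $0$ and the formula $\pi(u^*(o)-u(o))$ would be pointless; the paper's remark that $u^*(o)=u(o)$ holds ``in case $n=1$'' (where it is forced by the extra first-order hypothesis) confirms that no such identity is assumed for $n\ge2$.

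Fortunately your conclusion for $n\ge3$ survives without this identification: the Lebesgue-point hypothesis gives $\fint_{B_r(o)}u(y)-u(o)\di y\to u^*(o)-u(o)$, a finite constant, and multiplying by $\omega_nr^{n-2}(1+O(r^n))\to0$ already yields $0$. So the fix is simply to delete the claim that the two values coincide, argue the $n\ge3$ case from convergence (hence boundedness) of the averaged increment rather than from its vanishing, and leave the $n=2$ and $n=1$ computations as you wrote them.
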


Note that in case $n=1$, we obviously have $u^*(o)=u(o)$. Moreover, $u$ differentiable in $o$ is enough to imply convergence of $\Delta_{\mu,r}^{\dist_e}u(o)$ towards $0$ as $r \to 0^+$.

%When $n=1$, the right-hand side in \eqref{eq:Leb} becomes $2(r^{-1}+O(1)) \fint_{B_r(o)} u(y) - u(o) \di y$, thus $r^{-1}\fint_{B_r(o)} u(y) - u(o) \di y$ converging to some constant $b$ is as $r \to 0^+$ is a necessary condition to get
%\begin{equation}\label{eq:Leb3}
%\Delta_{\mu}^{\dist_e}u(o) = 2b,
%\end{equation}
%in which case, obviously,  $u^*(o)=u(o)$. Note that this applies especially to the case when $u$ is differentiable at $o$, for which $b=0$.

It is also worth mentioning that for the Poisson problem $\Delta_{\mu}^{\dist_e}u = v$, when $n\ge3$, a necessary condition for existence of a solution is $v(o)=0$.

This last example is a special case of a stratified measure that we discuss in the next section.

\section{AMV Laplacian for Stratified Measures}\label{sec:stratified}

In this section, we introduce the notion of a stratified measure and study the AMV Laplacian at the intersection of strata of such a measure. We then apply our results to the case of intersecting submanifolds in $\mathbb{R}^n$.

Recall that for $Q\ge 0$, a Borel measure $\meas$ on a metric space $(X,\dist)$ is called Ahlfors $Q$-regular if it satisfies $cr^{Q}\le\meas(B)\le Cr^{Q}$ for any metric ball $B \subset X$ with radius $r>0$, where $c,C>0$ are independent of the radius and the ball. If a measure $\mu$ is $Q$-Ahlfors regular, we say that $\mu$ has an Ahlfors dimension equal to $Q$.

\begin{defn}[Stratified Measures]
Let $(X,\dist)$ be a metric space. We call $\meas$ a \emph{stratified measure} on $(X,\dist)$ if
\[
\mu = \mu_1 + \cdots + \mu_k,
\]
where for any $1 \le j \le k$,
\begin{enumerate}[label=(\roman*)]
	\item $\mu_j$ is a measure supported on a closed set $Y_j \subset X$,
	\item $\mu_j$ is $Q_j$-Ahlfors regular on $(Y_j,\dist_{|Y_j})$,
	\item $Q_1<\ldots<Q_k$.
\end{enumerate}

\end{defn}

A particular example of a stratified measure is $\mathcal{H}^{m_{1}}\measrestr M_{1}+\ldots+\mathcal{H}^{m_{k}}\measrestr M_{k}$ where $M_1^{m_1}, \ldots, M_k^{m_k}$ are submanifolds of $\setR^n$ such that $m_1<\ldots<m_k$. See also Example \ref{ex:example???}.

The next theorem states that the lowest dimensional stratum determines the AMV Laplacian of a stratified measure.

\begin{thm}
	[AMV Laplacian on Intersections of Strata of a Stratified Measure]\label{thm:intersection-ahlfors-regular-measures}
	Let $(X,\dist)$ be a metric space equipped with a stratified measure $\mu$ and $u \in L^1_{\text{loc}}(X,\mu)$. For any $x\in\bigcap_{i=1}^{l}Y_{j_{i}}$ where $\{j_{i}\}$ is an increasing subsequence	of $\{1,\ldots,k\}$, if $\Delta_{\meas_{j_{1}}}^{\dist}u(x)$ exists and $r^{Q_{j_{i}}-Q_{j_{1}}}|\Delta_{\mu_{j_{i}},r}^{\dist}u(x)|\to 0$ as $r\to0^{+}$ for all $2\le i\le l$, then $\Delta_{\meas}^{\dist}u(x)$ exists and
	\[
	\Delta_{\meas}^{\dist}u(x)=\Delta_{\meas_{j_{1}}}^{\dist}u(x).
	\]
\end{thm}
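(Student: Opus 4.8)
The plan is to write $\Delta_{\mu,r}^{\dist}u(x)$ as a weighted average of the stratum-wise operators $\Delta_{\mu_{j_i},r}^{\dist}u(x)$ and then pass to the limit $r\to0^{+}$. Since each $\mu_j$ is supported on the closed set $Y_j$, any stratum with $x\notin Y_j$ satisfies $\mu_j(B_r(x))=0$ as soon as $r<\dist(x,Y_j)$; hence for all small $r$ only the strata containing $x$ contribute, and I take $\{j_i\}_{i=1}^{l}$ to enumerate precisely these (this is the natural reading of the hypothesis, and is needed so that the assumptions cover every contributing stratum). Splitting the defining integral along $\mu=\sum_j\mu_j$ and using $\int_{B_r(x)}(u-u(x))\di\mu_{j_i}=r^2\mu_{j_i}(B_r(x))\,\Delta_{\mu_{j_i},r}^{\dist}u(x)$, I obtain, for small $r$,
\[
\Delta_{\mu,r}^{\dist}u(x)=\sum_{i=1}^{l}\theta_i(r)\,\Delta_{\mu_{j_i},r}^{\dist}u(x),\qquad \theta_i(r)\df\frac{\mu_{j_i}(B_r(x))}{\mu(B_r(x))},
\]
where $\sum_{i=1}^{l}\theta_i(r)=1$.

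Next I would control the weights $\theta_i(r)$ by Ahlfors regularity. Because $x\in Y_{j_i}$ and $B_r(x)\cap Y_{j_i}$ is a metric ball of $(Y_{j_i},\dist)$ centered at $x$ of radius $r$, the $Q_{j_i}$-regularity of $\mu_{j_i}$ supplies constants $c_i,C_i>0$ with $c_i r^{Q_{j_i}}\le\mu_{j_i}(B_r(x))\le C_i r^{Q_{j_i}}$. Summing these and using $Q_{j_1}<\cdots<Q_{j_l}$, the lowest exponent dominates for $r\le1$, giving $c_1 r^{Q_{j_1}}\le\mu(B_r(x))\le\bigl(\sum_i C_i\bigr)r^{Q_{j_1}}$. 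Consequently, for $i\ge2$,
\[
\theta_i(r)\le\frac{C_i}{c_1}\,r^{Q_{j_i}-Q_{j_1}}\to0\quad\text{as }r\to0^{+},
\]
and therefore $\theta_1(r)=1-\sum_{i\ge2}\theta_i(r)\to1$.

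Finally I would pass to the limit termwise in the finite sum. The first term satisfies $\theta_1(r)\,\Delta_{\mu_{j_1},r}^{\dist}u(x)\to1\cdot\Delta_{\mu_{j_1}}^{\dist}u(x)$, the latter existing by hypothesis. For each $i\ge2$, the weight bound yields
\[
\theta_i(r)\,\bigl|\Delta_{\mu_{j_i},r}^{\dist}u(x)\bigr|\le\frac{C_i}{c_1}\,r^{Q_{j_i}-Q_{j_1}}\bigl|\Delta_{\mu_{j_i},r}^{\dist}u(x)\bigr|\to0,
\]
which is exactly the content of the decay hypothesis $r^{Q_{j_i}-Q_{j_1}}|\Delta_{\mu_{j_i},r}^{\dist}u(x)|\to0$. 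Summing the $l-1$ error terms then gives $\Delta_{\mu,r}^{\dist}u(x)\to\Delta_{\mu_{j_1}}^{\dist}u(x)$, which is the assertion.

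The argument is largely bookkeeping, but the one genuinely delicate point — and the place the hypotheses do real work — is the interplay between the algebraic decay $r^{Q_{j_i}-Q_{j_1}}$ of the higher-stratum weights (forced by Ahlfors regularity) and the possibly unbounded growth of the ratios $\Delta_{\mu_{j_i},r}^{\dist}u(x)$ themselves. This is precisely why the theorem must assume $r^{Q_{j_i}-Q_{j_1}}|\Delta_{\mu_{j_i},r}^{\dist}u(x)|\to0$ rather than mere existence of the higher AMV Laplacians: without it, a higher stratum could blow up fast enough to survive the weight's decay. A secondary subtlety, worth flagging explicitly, is the indexing convention, namely that every stratum whose support contains $x$ must appear among the $\{j_i\}$, since an omitted stratum through $x$ would contribute a term the hypotheses do not control.
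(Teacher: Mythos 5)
Your proof is correct and follows essentially the same route as the paper's: decompose $\Delta_{\mu,r}^{\dist}u(x)$ as the convex combination $\sum_i \theta_i(r)\,\Delta_{\mu_{j_i},r}^{\dist}u(x)$, use Ahlfors regularity to get $\theta_i(r)\le (C_i/c_1)r^{Q_{j_i}-Q_{j_1}}\to 0$ for $i\ge 2$ (hence $\theta_1(r)\to 1$), and invoke the decay hypothesis to kill the higher strata. Your explicit remark that the $\{j_i\}$ must exhaust the strata containing $x$ is exactly the content of the paper's opening ``without loss of generality'' reduction via the openness of the complements $Y_j^c$.
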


\begin{proof}
	We can without loss of generality assume that $\{j_{i}\}=\{j\}=\{1,\ldots,l\}$: indeed, since $Y_{j}^{c}$ is open for any $1\le j\le k$, there exists $r>0$ small enough such that $\mu \measrestr B_r(x) = \mu_{j_{1}} \measrestr B_r(x) + \ldots + \mu_{j_{l}} \measrestr B_r(x)$, hence the validity of the above relabeling. Let therefore $x\in\bigcap_{j=1}^{l}Y_{j}$
	and consider $\Delta_{\meas,r}^{\dist}u(x)$. Then
	\begin{align*}
	\Delta_{\meas,r}^{\dist}u(x) & =\frac{1}{r^{2}\meas(B_{r}(x))}\int_{B_{r}(x)}u(y)-u(x)\di\meas(y)\\
	& =\frac{1}{r^{2}\mu(B_{r}(x))}\sum_{j=1}^{l}\int_{B_{r}(x)\cap Y_{j}}u(y)-u(x)\di\meas_{j}(y).
	\end{align*}
	For a given $j$, let $c_j,C_j$ be the Ahlfors constants related to $\mu_j$.  	Note that 
\[
	\meas(B_{r}(x)) =\sum_{j=1}^{l}\mu_{j}(B_{r}(x)\cap Y_{j})
	\ge\sum_{j=1}^{l}c_{j}r^{Q_{j}}\ge c_{1}r^{Q_{1}}.
\]
	For $j\ge2$ we have, by assumption,
	\begin{align}
	& \phantom{{}={}} \bigg|\frac{1}{r^{2}\meas(B_{r}(x))}\int_{B_{r}(x)\cap Y_{j}}u(y)-u(x)\di\meas_{j}(y)\bigg|\nonumber \\
	& \le \frac{1}{c_{1}r^{Q_{1}+2}}\bigg|\int_{B_{r}(x)\cap Y_{j}}u(y)-u(x)\di\meas_{j}(y)\bigg|\nonumber \\
	& =  \frac{\meas_{j}(B_{r}(x)\cap Y_{j})}{c_{1}r^{Q_{1}+2}}\bigg|\fint_{B_{r}(x)\cap Y_{j}}u(y)-u(x)\di\meas_{j}(y)\bigg|\nonumber \\
	& \le  \frac{C_{j}}{c_{1}}r^{Q_{j}-Q_{1}}\bigg|\Delta_{\mu_{j},r}^{\dist}u(x)\bigg|\to0,\qquad r\to0^{+}.\label{eq:lowerordermeasures}
	\end{align}
	Then
	\begin{align*}
	& \phantom{{}={}} \left|\Delta_{\meas,r}^{\dist}u(x)-\Delta_{\meas_{1}}^{\dist}u(x)\right|\\
	& =\bigg|\frac{1}{r^{2}\meas(B_{r}(x))}\sum_{j=1}^{l}\int_{B_{r}(x)\cap Y_{j}}u(y)-u(x)\di\meas_{j}(y)-\Delta_{\meas_{1}}^{\dist}u(x)\bigg|\\
	& =\bigg|\sum_{j=1}^{l}\frac{\meas_j(B_{r}(x)\cap Y_{j})}{r^{2}\meas(B_{r}(x))}\fint_{B_{r}(x)\cap Y_{j}}u(y)-u(x)\di\meas_{j}(y)-\Delta_{\meas_{1}}^{\dist}u(x)\bigg|\\
	& =\bigg|\sum_{j=1}^{l}\frac{\meas_j(B_{r}(x)\cap Y_{j})}{\meas(B_{r}(x))}\Delta_{\meas_{j},r}^{\dist}u(x)-\Delta_{\meas_{1}}^{\dist}u(x)\bigg|\\
	& \le\bigg|\frac{\meas_1(B_{r}(x)\cap Y_{1})}{\meas(B_{r}(x))}\Delta_{\meas_{1},r}^{\dist}u(x)-\Delta_{\meas_{1}}^{\dist}u(x)\bigg|+\bigg|\sum_{j=2}^{l}\frac{\meas_j(B_{r}(x)\cap Y_{j})}{\meas(B_{r}(x))}\Delta_{\meas_{j},r}^{\dist}u(x)\bigg| .
	\end{align*}
	The second term tends to zero as $r\to0^{+}$ by \eqref{eq:lowerordermeasures}.
	Moreover,
	\begin{align*}
	\frac{\meas_1(B_{r}(x)\cap Y_{1})}{\meas(B_{r}(x))} & =\frac{\meas_1(B_{r}(x)\cap Y_{1})}{\sum_{j=1}^{k}\mu_{j}(B_{r}(x)\cap Y_{j})}\le1,\\
	\frac{\meas_1(B_{r}(x)\cap Y_{1})}{\meas(B_{r}(x))} & =\frac{\meas_1(B_{r}(x)\cap Y_{1})}{\sum_{j=1}^{k}\mu_{j}(B_{r}(x)\cap Y_{j})}\\
	& =\frac{1}{1+\sum_{j=2}^{k}\frac{\mu_{j}(B_{r}(x)\cap Y_{j})}{\mu_{1}(B_{r}(x)\cap Y_{1})}}\\
	& \ge\frac{1}{1+\sum_{j=2}^{k}\frac{C_{j}}{c_{1}}r^{Q_{j}-Q_{1}}}\to1,\qquad r\to0^{+}.
	\end{align*}
	Therefore
	\[
	\limsup_{r\to0^{+}}\left|\Delta_{\meas,r}^{\dist}u(x)-\Delta_{\meas_{1}}^{\dist}u(x)\right|
	 =\limsup_{r\to0^{+}}\left|\frac{\meas_1(B_{r}(x)\cap Y_{j})}{\meas(B_{r}(x))}\Delta_{\meas_{1},r}^{\dist}u(x)-\Delta_{\meas_{1}}^{\dist}u(x)\right|
	 =0.
\]
\end{proof}
\begin{rem}
	Note that a sufficient condition for $r^{Q_{j_{i}}-Q_{j_{1}}}|\Delta_{\mu_{j_{i}},r}^{\dist}u(x)|$
	to be $o(1)$ is boundedness of $|\Delta_{\mu_{j_{i}},r}^{\dist}u(x)|$
	for all $2\le i\le l$.
\end{rem}

The previous theorem can be extended in a straightforward way to sums
of nondecreasing Ahlfors regular measures if limits of the
form $\frac{\meas_j(B_{r}(x)\cap Y_{j})}{\meas(B_{r}(x))}$ exist as
$r$ tends to $0^{+}$ for the lowest dimensional stratum, reducing the AMV Laplacian to a convex
combination of the AMV Laplacians of this stratum.

\begin{cor}
	\label{cor:intersection-ahlfors-regular-measures-nondecreasing} Let
	$(X,\dist,\meas)$ and $x\in\bigcap_{i=1}^{l}Y_{j_{i}}$ be as in
	Theorem \ref{thm:intersection-ahlfors-regular-measures} except that
	$Q_{j}$ is nondecreasing with $Q_{j_{1}}=Q_{j_{2}}=\ldots=Q_{j_{n}}<Q_{j_{n+1}}$ and
	$r^{Q_{j_{i}}-Q_{j_{1}}}|\Delta_{\mu_{j_{i}},r}^{\dist}u(x)|\to 0$
	as $r\to0^{+}$ for any $n+1\le i\le l$. Assume that $\Delta_{\meas_{j_{1}}}^{\dist}u(x)$,
	$\Delta_{\meas_{j_{2}}}^{\dist}u(x),\ldots,\Delta_{\meas_{j_{n}}}^{\dist}u(x)$
	exist. Assume also that $\alpha_{i}(x) \df \lim_{r\to0^{+}}\meas_j(B_{r}(x)\cap Y_{j_{i}})/\meas(B_{r}(x))$
	exists for any $1\le i\le n$. Then $\Delta_{\meas}^{\dist}u(x)$ exists,
	\[
	\Delta_{\meas}^{\dist}u(x)=\sum_{i=1}^{n}\alpha_{i}(x)\Delta_{\meas_{j_{i}}}^{\dist}u(x),
	\]
	and $\sum_{i=1}^{n}\alpha_{i}(x)=1$.
\end{cor}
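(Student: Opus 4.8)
The plan is to mimic the proof of Theorem \ref{thm:intersection-ahlfors-regular-measures}, splitting the difference quotient into contributions from the lowest-dimensional strata (indices $1$ to $n$, all of common Ahlfors dimension $Q_{j_1}$) and the higher-dimensional ones ($n+1$ to $l$). First I would reduce, exactly as in the theorem, to the situation $\{j_i\}=\{1,\ldots,l\}$ by using openness of $Y_j^c$ to discard strata not passing through $x$ on a small enough ball. Then I would write
\[
\Delta_{\meas,r}^{\dist}u(x)=\sum_{i=1}^{n}\frac{\meas_{j_i}(B_r(x)\cap Y_{j_i})}{\meas(B_r(x))}\Delta_{\meas_{j_i},r}^{\dist}u(x)+\sum_{i=n+1}^{l}\frac{\meas_{j_i}(B_r(x)\cap Y_{j_i})}{\meas(B_r(x))}\Delta_{\meas_{j_i},r}^{\dist}u(x).
\]

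The second sum is handled verbatim as in \eqref{eq:lowerordermeasures}: bounding $\meas(B_r(x))\ge c_{1}r^{Q_{j_1}}$ from below and using the hypothesis $r^{Q_{j_i}-Q_{j_1}}|\Delta_{\meas_{j_i},r}^{\dist}u(x)|\to 0$ for $i\ge n+1$ shows each such term tends to $0$. For the first sum, I would use the assumption that each weight $\alpha_i(x)=\lim_{r\to0^+}\meas_{j_i}(B_r(x)\cap Y_{j_i})/\meas(B_r(x))$ exists together with the existence of $\Delta_{\meas_{j_i}}^{\dist}u(x)=\lim_{r\to0^+}\Delta_{\meas_{j_i},r}^{\dist}u(x)$, so that each product converges to $\alpha_i(x)\Delta_{\meas_{j_i}}^{\dist}u(x)$ by the algebra of limits. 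Combining the two sums gives existence of the limit and the stated formula $\Delta_{\meas}^{\dist}u(x)=\sum_{i=1}^{n}\alpha_i(x)\Delta_{\meas_{j_i}}^{\dist}u(x)$.

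It remains to verify $\sum_{i=1}^{n}\alpha_i(x)=1$. The natural approach is to note $\sum_{i=1}^{l}\meas_{j_i}(B_r(x)\cap Y_{j_i})/\meas(B_r(x))=1$ identically in $r$, and to show the weights of the strictly-higher-dimensional strata ($i\ge n+1$) vanish in the limit: by Ahlfors regularity, $\meas_{j_i}(B_r(x)\cap Y_{j_i})/\meas(B_r(x))\le C_{j_i}r^{Q_{j_i}}/(c_{1}r^{Q_{j_1}})=(C_{j_i}/c_{1})r^{Q_{j_i}-Q_{j_1}}\to0$ since $Q_{j_i}>Q_{j_1}$. Passing to the limit in the identity then forces $\sum_{i=1}^{n}\alpha_i(x)=1$.

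I expect no genuine obstacle here; the corollary is a routine strengthening of the theorem, and the only point requiring a little care is that I cannot simply invoke \eqref{eq:lowerordermeasures}'s conclusion for the equal-dimension strata — for those the weight does not vanish, so I must instead separately use the \emph{assumed} existence of the ratio limits $\alpha_i(x)$, which is precisely the extra hypothesis distinguishing this corollary from the theorem. The cleanest write-up keeps the three ingredients (vanishing of high-dimensional weights, convergence of each low-dimensional product, and the partition-of-unity identity summing to $1$) clearly separated.
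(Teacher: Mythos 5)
Your proposal is correct and follows essentially the same route as the paper's proof: the same reduction to consecutive indices, the same splitting into the equal-dimension strata (handled via the assumed limits $\alpha_i(x)$ and $\Delta_{\meas_{j_i}}^{\dist}u(x)$) and the strictly-higher-dimensional strata (handled exactly as in \eqref{eq:lowerordermeasures}), and the same Ahlfors-regularity argument showing the higher-dimensional weights vanish so that the ratios summing to $1$ yields $\sum_{i=1}^{n}\alpha_i(x)=1$. No discrepancies to report.
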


\begin{proof}
	As in the proof of Theorem \ref{thm:intersection-ahlfors-regular-measures}, we have $\meas(B_{r}(x)) \ge c_{1}r^{Q_{1}}$, and we can reduce to the case $\{j_{i}\}=\{j\}=\{1,\ldots,n,\ldots,l\}$ for which
	\[
	\Delta_{\meas,r}^{\dist}u(x)=\frac{1}{r^{2}\meas(B_{r}(x))}\sum_{j=1}^{l}\int_{B_{r}(x)\cap Y_{j}}u(y)-u(x)\di\meas_{j}(y).
	\]
	
	For $j\ge n+1$ we have, by assumption,
	\begin{align}
	& \phantom{{}={}} \bigg|\frac{1}{r^{2}\meas(B_{r}(x))}\int_{B_{r}(x)\cap Y_{j}}u(y)-u(x)\di\meas_{j}(y)\bigg|\nonumber \\
	& \le  \frac{1}{c_{1}r^{Q_{1}+2}}\bigg|\int_{B_{r}(x)\cap Y_{j}}u(y)-u(x)\di\meas_{j}(y)\bigg|\nonumber \\
	& =  \frac{\meas_{j}(B_{r}(x)\cap Y_{j})}{c_{1}r^{Q_{1}+2}}\bigg|\fint_{B_{r}(x)\cap Y_{j}}u(y)-u(x)\di\meas_{j}(y)\bigg|\nonumber \\
	& \le \frac{C_{j}}{c_{1}}r^{Q_{j}-Q_{1}}\left|\Delta_{\mu_{j},r}^{\dist}u(x)\right|\to0,\qquad r\to0^{+}\label{eq:lowerordermeasures-nondecreasing}.
	\end{align}
	Continuing,
	\begin{align*}
	& \phantom{{}={}} \bigg|\Delta_{\meas,r}^{\dist}u(x)-\sum_{j=1}^{n}\alpha_{j}(x)\Delta_{\meas_{j}}^{\dist}u(x)\bigg|\\
	& =\bigg|\frac{1}{r^{2}\meas(B_{r}(x))}\sum_{j=1}^{l}\int_{B_{r}(x)\cap Y_{j}}u(y)-u(x)\di\meas_{j}(y)-\sum_{j=1}^{n}\alpha_{j}(x)\Delta_{\meas_{j}}^{\dist}u(x)\bigg|\\
	& =\bigg|\sum_{j=1}^{l}\frac{\meas_j(B_{r}(x)\cap Y_{j})}{r^{2}\meas(B_{r}(x))}\fint_{B_{r}(x)\cap Y_{j}}u(y)-u(x)\di\meas_{j}(y)-\sum_{j=1}^{n}\alpha_{j}(x)\Delta_{\meas_{j}}^{\dist}u(x)\bigg|\\
	& =\bigg|\sum_{j=1}^{l}\frac{\meas_j(B_{r}(x)\cap Y_{j})}{\meas(B_{r}(x))}\Delta_{\meas_{j},r}^{\dist}u(x)-\sum_{j=1}^{n}\alpha_{j}(x)\Delta_{\meas_{j}}^{\dist}u(x)\bigg|\\
	& \le\bigg|\sum_{j=1}^{n}\frac{\meas_j(B_{r}(x)\cap Y_{j})}{\meas(B_{r}(x))}\Delta_{\meas_{j},r}^{\dist}u(x)-\sum_{j=1}^{n}\alpha_{j}(x)\Delta_{\meas_{j}}^{\dist}u(x)\bigg|\\
	& \qquad+\bigg|\sum_{j=n+1}^{l}\frac{\meas_j(B_{r}(x)\cap Y_{j})}{\meas(B_{r}(x))}\Delta_{\meas_{j},r}^{\dist}u(x)\bigg|
	\end{align*}
	The second term tends to zero as $r\to0^{+}$ by \eqref{eq:lowerordermeasures-nondecreasing}.
	Also, by assumption,
	\begin{align*}
	\frac{\meas_j(B_{r}(x)\cap Y_{j})}{\meas(B_{r}(x))} & \to\alpha_{j}(x),\qquad r\to0^{+}.
	\end{align*}
	Therefore,
	\begin{align*}
	& \phantom{{}={}} \limsup_{r\to0^{+}}\left|\Delta_{\meas,r}^{\dist}u(x)-\sum_{j=1}^{n}\alpha_{j}(x)\Delta_{\meas_{j}}^{\dist}u(x)\right|\\
	& \le \limsup_{r\to0^{+}}\left|\sum_{j=1}^{n}\left(\frac{\meas_j(B_{r}(x)\cap Y_{j})}{\meas(B_{r}(x))}\Delta_{\meas_{j},r}^{\dist}u(x)-\alpha_{j}(x)\Delta_{\meas_{j}}^{\dist}u(x)\right)\right| =0.
	\end{align*}
	The fact $\sum_{j=1}^{n}\alpha_{j}(x)=1$ is immediate since
	\begin{align*}
	\bigg|\sum_{j=1}^{n}\alpha_{i}(x)-1\bigg| & =\bigg|\sum_{j=1}^{n}\lim_{r\to0^{+}}\frac{\meas_j(B_{r}(x)\cap Y_{j})}{\meas(B_{r}(x))}-1\bigg|\\
	& =\bigg|\lim_{r\to0^{+}}\frac{1}{\sum_{j=1}^{l}\meas(B_{r}(x)\cap Y_{j})}\sum_{j=1}^{n}\meas_j(B_{r}(x)\cap Y_{j})-1\bigg|\\
	& =\bigg|\lim_{r\to0^{+}}\frac{1}{\sum_{j=1}^{l}\meas(B_{r}(x)\cap Y_{j})}\bigg(\sum_{j=1}^{n}\meas_j(B_{r}(x)\cap Y_{j})-\sum_{j=1}^{l}\meas_j(B_{r}(x)\cap Y_{j})\bigg)\bigg|\\
	& =\bigg|\lim_{r\to0^{+}}\frac{1}{O(r^{Q_{1}})}O(r^{Q_{n+1}})\bigg|\\
	& =0.
	\end{align*}
\end{proof}

\begin{example}
An interesting example is $(\setR^2,\dist_e,\meas\df\mu_1 + \ldots + \mu_l)$ where $\mu_i=\haus^1 \measrestr S_i$ for any $1 \le i \le l$ and $S_i$ is the image of a smooth curve $c_i$ emanating from $o=(0,0)$ with direction $\tau_i$. Here $\haus^1$ denotes the $1$-dimensional Hausdorff measure. A direct computation shows that for any $u \in C^2(\setR^2)$,
$$
\Delta_{\meas}^{\dist_e}u(o) \in \setR \quad \iff \quad \sum_{i=1}^l \partial_{\tau_i} u(o) = 0,
$$
where $\partial_{\tau_i}u$ is the directional derivative of $u$ along $\tau_i$. This is the well-known Kirchhoff condition (compare for instance with \cite[Section 4]{MR3985395}). If this condition holds, Corollary \ref{cor:intersection-ahlfors-regular-measures-nondecreasing} implies
$$
\Delta_{\meas}^{\dist_e}u(o) = \frac{1}{l} \sum_{i=1}^l \Delta_{\meas_i}^{\dist_e}u(o).
$$
\end{example}

Let us apply the previous  results to the case of intersecting submanifolds in $\setR^n$. Recall that a Riemannian submanifold of $\setR^n$ is equipped with the Riemannian metric inherited from the Euclidean metric of $\setR^n$. If $(M,g)$ is a smooth $m$-dimensional Riemannian submanifold of $\setR^n$, then the topological metric $\dist_g$ induced by $g$ in the usual way (i.e.~minimizing the length of curves joining two points) satisfies $\dist_g \ge \dist_e$, and the canonically associated Riemannian volume measure $\vol_g$ on $M$ coincides with the $m$-dimensional Hausdorff measure $\haus^m$. %We refer to \cite{MR2088027} for background material on Riemannian geometry. 

\begin{prop}\label{prop:MVL-equals-Delta-g}
Let $(M,g)$ be a smooth $m$-dimensional Riemannian submanifold of $\setR^n$, and consider $(\setR^n,\dist_e,\haus^m \measrestr M$). Then, for any $u\in C^{2}(\setR^n)$ and any interior point $x$ in $M$,
	\[
	\Delta_{\mathcal{H}^{m}\measrestr M}^{\dist_{e}}u(x)=\frac{\Delta_{g}u(x)}{2(m+2)},
	\]
	where $\Delta_{g}$ is the Laplace-Beltrami operator on $(M,g)$.
\end{prop}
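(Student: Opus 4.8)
The plan is to reduce the statement to the Riemannian identity \eqref{eq:manifold}, which I am allowed to assume, by comparing the Euclidean ball $B_r(x)$ with the intrinsic geodesic ball of $(M,g)$. Write $\mu \df \haus^m \measrestr M$, which by the paragraph preceding the proposition coincides with the Riemannian volume $\vol_g$; note also that for $y \in M$ one has $u(y)-u(x) = (u|_M)(y) - (u|_M)(x)$, with $u|_M \in C^2(M)$. Introducing the geodesic ball $B_r^g(x) \df \{y \in M : \dist_g(x,y) < r\}$, the whole point is to show that
\[
\Delta_{\mu,r}^{\dist_e}u(x) - \Delta_{\vol_g,r}^{\dist_g}u(x) \longto 0 \qquad \text{as } r \to 0^+,
\]
after which \eqref{eq:manifold} applied to $u|_M$ delivers the value $\Delta_g u(x)/(2(m+2))$.

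First I would record the geometric comparison between the two balls. Since $x$ is an interior point of the embedded submanifold $M$, for all sufficiently small $r$ the set $B_r(x)\cap M$ lies in a single normal-coordinate neighborhood of $x$. On such a neighborhood I would parametrize a unit-speed geodesic $\gamma$ issuing from $x$ and Taylor-expand the chord length: the condition $\abs{\gamma'}\equiv 1$ gives $\gamma'\cdot\gamma''=0$, which kills the cubic term and yields $\abs{\gamma(s)-\gamma(0)}^2 = s^2 - \tfrac{1}{12}\abs{\gamma''(0)}^2 s^4 + O(s^5)$, hence $\dist_g(x,y) = \dist_e(x,y)\big(1 + O(\dist_e(x,y)^2)\big)$ for $y$ near $x$. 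Combined with the inequality $\dist_g \ge \dist_e$ recalled before the statement, this produces the nesting
\[
B_r^g(x) \subseteq B_r(x)\cap M \subseteq B_{r(1+Cr^2)}^g(x)
\]
for a constant $C$ depending on the second fundamental form at $x$. The symmetric difference is then contained in a geodesic annulus of width $O(r^3)$, so that, using the standard volume asymptotics $\vol_g(B_\rho^g(x)) = \omega_m\rho^m(1+O(\rho^2))$ with $\omega_m = \leb^m(B_1)$,
\[
\mu\big((B_r(x)\cap M)\setminus B_r^g(x)\big) = O(r^{m+2}), \qquad \mu(B_r(x)\cap M) = \omega_m r^m\big(1+O(r^2)\big).
\]

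With this the comparison of the two averaged quotients is routine book-keeping. Writing $A = B_r(x)\cap M$, $B = B_r^g(x)$, the Lipschitz bound $\abs{u(y)-u(x)} \le C\dist_e(x,y) \le Cr$ on $A$ gives $\big|\int_A(u-u(x))\di\mu - \int_B(u-u(x))\di\vol_g\big| = O(r^{m+3})$, while $\mu(A) - \vol_g(B) = O(r^{m+2})$ and both volumes equal $\omega_m r^m(1+O(r^2))$. Forming the combined fraction, its numerator is $O(r^{2m+3})$ against a denominator $\omega_m^2 r^{2m}(1+o(1))$, so that
\[
\frac{1}{r^2}\bigg(\frac{\int_A(u-u(x))\di\mu}{\mu(A)} - \frac{\int_B(u-u(x))\di\vol_g}{\vol_g(B)}\bigg) = O(r) \longto 0,
\]
and therefore $\Delta_\mu^{\dist_e}u(x) = \lim_{r\to0^+}\Delta_{\vol_g,r}^{\dist_g}u(x) = \Delta_g u(x)/(2(m+2))$ by \eqref{eq:manifold}.

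The main obstacle is the geometric comparison in the second step, and it has two facets: one must ensure that for small $r$ the Euclidean ball meets $M$ in a single local sheet, so that no portion of $M$ which is geodesically far but Euclidean-close contaminates $B_r(x)\cap M$ — this is exactly where embeddedness and the interiority of $x$ enter — and one must make the second-order agreement $\dist_g = \dist_e(1+O(\dist_e^2))$ quantitative enough to bound the annulus width. Once the symmetric-difference estimate $O(r^{m+2})$ is secured, the passage to the limit is elementary and the intrinsic identity \eqref{eq:manifold} carries the rest.
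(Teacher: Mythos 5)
Your argument is correct, and its skeleton is the same as the paper's: both proofs compare $B_r(x)\cap M$ with the geodesic ball $B_r^g(x)\subseteq B_r(x)\cap M$, kill the contribution of the difference set via the Lipschitz bound $|u(y)-u(x)|=O(r)$ together with the volume estimate $\haus^m\bigl((B_r(x)\cap M)\setminus B_r^g(x)\bigr)=O(r^{m+2})$, and then invoke the intrinsic asymptotic mean value expansion on geodesic balls (your appeal to \eqref{eq:manifold}; the paper cites Gray--Willmore). The one place where you genuinely diverge is the justification of the $O(r^{m+2})$ estimate: the paper obtains it by subtracting Gray's intrinsic volume expansion from the Karp--Pinsky expansion \eqref{eq:volume-extrinsic-ball} of the extrinsic ball, whereas you derive the two-sided inclusion $B_r^g(x)\subseteq B_r(x)\cap M\subseteq B^g_{r(1+Cr^2)}(x)$ from the chord-length expansion $\dist_g=\dist_e(1+O(\dist_e^2))$ (your computation of the quartic coefficient $-\tfrac{1}{12}|\gamma''(0)|^2$ is right) and then bound the resulting geodesic annulus by Gray's formula alone. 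Your route is more self-contained and elementary, at the cost of having to argue explicitly that $B_r(x)\cap M$ eventually lies in a single normal-coordinate sheet --- which you correctly flag, and which does follow from embeddedness since the subspace and manifold topologies agree; the paper absorbs this same point into the hypotheses of the Karp--Pinsky citation. The final bookkeeping ($O(r^{2m+3})$ numerator over $\omega_m^2r^{2m}$ denominator, hence an $O(r)$ discrepancy after dividing by $r^2$) matches the paper's estimate of the first term.
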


\begin{proof}
	Since $\dist_g\ge \dist_e$, the geodesic ball $B_{r}^{g}(x)$ is included in the subset $B_{r}(x)\cap M$ of $\setR^n$. Then
	\begin{align*}
	\Delta_{\mathcal{H}^{m}\measrestr M,r}^{\dist_{e}}u(x) & =\frac{1}{r^{2}}\fint_{B_{r}(x)\cap M}u(y)-u(x)\di\mathcal{H}^{m}(y)\\
	& =\frac{1}{r^{2}\mathcal{H}^{m}(B_{r}(x)\cap M)}\bigg(\int_{(B_{r}(x)\cap M)\backslash B_{r}^{g}(x)}u(y)-u(x)\di\mathcal{H}^{m}(y)\\
	& \qquad+\int_{B_{r}^{g}(x)}u(y)-u(x)\di V_{g}(y)\bigg).
	\end{align*}
	For the first term, by a first order Taylor expansion of $u$ around
	$x$,
	\begin{align*}
	& \left|\frac{1}{r^{2}\mathcal{H}^{m}(B_{r}(x)\cap M)}\int_{(B_{r}(x)\cap M)\backslash B_{r}^{g}(x)}u(y)-u(x)\di\mathcal{H}^{m}(y)\right|\\
	\le & \frac{O(r)}{r^{2}\mathcal{H}^{m}(B_{r}(x)\cap M)}(\mathcal{H}^{m}(B_{r}(x)\cap M)-\mathcal{H}^{m}(B_{r}^{g}(x))).
	\end{align*}
	Now, from the works of Karp and Pinsky \cite{MR967797}, the volume
	of the extrinsic ball for small $r$ is given by
	\begin{equation}
	\mathcal{H}^{m}(B_{r}(x)\cap M)=\omega_{m}r^{m}\left(1+\frac{2\lVert II(x)\rVert-\lVert H(x)\rVert}{8(m+2)}r^{2}+O(r^{3})\right),\label{eq:volume-extrinsic-ball}
	\end{equation}
	where $II$ denotes the second fundamental form of $(M,g)$ and $H=\Tr II$
	the mean curvature. Furthermore, the volume of the intrinsic ball
	has been calculated by Gray \cite{MR0339002} for small $r$ as
	\[
	(\haus^m(B_{r}^{g}(x))\, =\, )\, \, \, \vol_g(B_{r}^{g}(x))=\omega_{m}r^{m}\left(1-\frac{R(x)}{6(m+2)}r^{2}+O(r^{4})\right),
	\]
	where $R$ is the scalar curvature on $(M,g)$. Hence
	\[
	\mathcal{H}^{m}(B_{r}(x)\cap M)-\mathcal{H}^{m}(B_{r}^{g}(x))=O(r^{m+2}).
	\]
	Therefore 
	$$
	\frac{O(r)}{r^{2}\mathcal{H}^{m}(B_{r}(x)\cap M)}(\mathcal{H}^{m}(B_{r}(x)\cap M)-\mathcal{H}^{m}(B_{r}^{g}(x)))  =\frac{O(r)O(r^{m+2})}{O(r^{m+2})}=O(r),
	$$
	so the first term tends to zero with $r$. The second term tends to
	$\frac{\Delta_{g}u(x)}{2(m+2)}$ since 
	\[
	\frac{\mathcal{H}^{m}(B_{r}^{g}(x))}{\mathcal{H}^{m}(B_{r}(x)\cap M)}=\frac{\omega_{m}r^{m}(1+O(r^{2}))}{\omega_{m}r^{m}(1+O(r^{2}))}\to1,\qquad r\to0^{+},
	\]
	and by the works of Gray and Willmore \cite{MR677493},
	\[
	\frac{1}{r^{2}}\fint_{B_{r}^{g}(x)}u(y)-u(x)\di V_{g}\to\frac{\Delta_{g}u(x)}{2(m+2)}\, \cdot
	\]
	Consequently,
	\begin{align*}
	\frac{1}{r^{2}\mathcal{H}^{m}(B_{r}(x)\cap M)}\int_{B_{r}^{g}(x)}u(y)-u(x)\di V_{g} & =\frac{\mathcal{H}^{m}(B_{r}^{g}(x))}{r^{2}\mathcal{H}^{m}(B_{r}(x)\cap M)}\fint_{B_{r}^{g}(x)}u(y)-u(x)\di V_{g}\\
	& \to\frac{\Delta_{g}u(x)}{2(m+2)}\, \cdot
	\end{align*}
\end{proof}

In this smooth context, we can prove the following refinement of Corollary \ref{cor:intersection-ahlfors-regular-measures-nondecreasing} where we get a mean value of the AMV Laplacians of the lowest stratum.

\begin{cor}\label{cor:???}
	Let $(X,\dist,\meas)=(\mathbb{R}^{n},\dist_{e},\mathcal{H}^{m_{1}}\measrestr M_{1}+\ldots+\mathcal{H}^{m_{k}}\measrestr M_{k})$
	where $\{m_j\}_j$ is a non-decreasing sequence of integers and $(M_j,g_j)$ is a smooth $m_j$-dimensional Riemannian submanifold of $\setR^n$ without boundary for any $1\le j \le k$. Take $u\in L_{\text{loc}}^{1}(X,\mu)$ and let $x\in\bigcap_{i=1}^{l}M_{j_{i}}$ for a subsequence $\{j_{i}\}$
	of $\{1,\ldots,k\}$ with $m_{j_{1}}=\ldots=m_{j_{t}}$.
If $\Delta_{\mathcal{H}^{m_{j_i}}\measrestr M_{1}}^{\dist}u(x)$ exist for all $i \in\{1,\ldots,t\}$ and $r^{m_{j_{i}}-m_{j_{1}}}|\Delta_{\mu_{j_{i}},r}^{\dist}u(x)|\to 0$
	as $r\to0^{+}$ for all $i \in\{t+1,\ldots,k\}$, then $\Delta_{\meas}^{\dist}u(x)$ exists and
	\[
	\Delta_{\meas}^{\dist}u(x)=\frac{1}{t}\sum_{i=1}^{t}\Delta_{\mathcal{H}^{m_{j_{i}}}\measrestr M_{j_{i}}}^{\dist}u(x).
	\]
	Furthermore, if $u\in C^{2}(\mathbb{R}^{n})$, we have
	\[
	\Delta_{\meas}^{\dist}u(x)=\frac{1}{2(m_{j_{1}}+2)t}\sum_{i=1}^{t}\Delta_{g_{i}}u(x).
	\]
\end{cor}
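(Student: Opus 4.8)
The plan is to read off the first identity as a direct application of Corollary \ref{cor:intersection-ahlfors-regular-measures-nondecreasing}, with the only genuine work being the explicit evaluation of the convex weights $\alpha_i(x)$, and then to obtain the second identity by feeding Proposition \ref{prop:MVL-equals-Delta-g} into the first.

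First I would verify that the hypotheses of Corollary \ref{cor:intersection-ahlfors-regular-measures-nondecreasing} are in force. Each summand $\mathcal{H}^{m_j}\measrestr M_j$ is $m_j$-Ahlfors regular near $x$ in $M_j$ (this is immediate from the Karp--Pinsky volume expansion \eqref{eq:volume-extrinsic-ball}, which gives $\mathcal{H}^{m_j}(B_r(x)\cap M_j)=\omega_{m_j}r^{m_j}(1+O(r^2))$), and since the AMV Laplacian is a purely pointwise object, local regularity at $x$ is all that the argument of Theorem \ref{thm:intersection-ahlfors-regular-measures} ever uses. After relabelling so that the lowest-dimensional strata are exactly those indexed by $i\in\{1,\ldots,t\}$, the decay condition $r^{m_{j_i}-m_{j_1}}|\Delta_{\mu_{j_i},r}^{\dist}u(x)|\to 0$ for $i>t$ is assumed outright, so the remaining task is to show that $\alpha_i(x)=\lim_{r\to0^+}\mathcal{H}^{m_{j_1}}(B_r(x)\cap M_{j_i})/\meas(B_r(x))$ exists and equals $1/t$ for each $i\le t$.

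This is the heart of the matter. By \eqref{eq:volume-extrinsic-ball} applied to each of the $t$ strata of common dimension $m_{j_1}$, every numerator satisfies $\mathcal{H}^{m_{j_1}}(B_r(x)\cap M_{j_i})=\omega_{m_{j_1}}r^{m_{j_1}}(1+O(r^2))$, with the \emph{same} leading coefficient $\omega_{m_{j_1}}$. In the denominator, these $t$ lowest strata each contribute $\omega_{m_{j_1}}r^{m_{j_1}}(1+O(r^2))$, while every higher stratum contributes $O(r^{m_{j_i}})$ with $m_{j_i}>m_{j_1}$ and is therefore negligible, so $\meas(B_r(x))=t\,\omega_{m_{j_1}}r^{m_{j_1}}(1+o(1))$. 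Dividing forces $\alpha_i(x)=1/t$, and Corollary \ref{cor:intersection-ahlfors-regular-measures-nondecreasing} then yields the first displayed identity.

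For the second formula I would substitute Proposition \ref{prop:MVL-equals-Delta-g} into the first. Since $m_{j_i}=m_{j_1}$ for $i\le t$ and each $M_{j_i}$ is boundaryless (so $x$ is an interior point), the proposition gives $\Delta_{\mathcal{H}^{m_{j_i}}\measrestr M_{j_i}}^{\dist}u(x)=\Delta_{g_i}u(x)/(2(m_{j_1}+2))$, and summing produces the stated expression. When $u\in C^2(\mathbb{R}^n)$ one must also confirm the hypotheses of the first part automatically: existence of the stratum-wise AMV Laplacians is exactly Proposition \ref{prop:MVL-equals-Delta-g}, while a second-order Taylor expansion of $u$ at $x$ shows $|\Delta_{\mu_{j_i},r}^{\dist}u(x)|$ is bounded in $r$, so the remark following Theorem \ref{thm:intersection-ahlfors-regular-measures} supplies the required decay for the higher strata. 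The only delicate point is the weight computation, where one must be certain the sub-leading curvature terms in \eqref{eq:volume-extrinsic-ball} genuinely cancel in the ratio; but because all $t$ lowest strata share the identical leading term $\omega_{m_{j_1}}r^{m_{j_1}}$, the limit is pinned to the uniform value $1/t$.
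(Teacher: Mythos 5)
Your proposal is correct and follows essentially the same route as the paper: apply Corollary \ref{cor:intersection-ahlfors-regular-measures-nondecreasing} after computing the weights $\alpha_i(x)=1/t$ from the Karp--Pinsky expansion \eqref{eq:volume-extrinsic-ball}, then substitute Proposition \ref{prop:MVL-equals-Delta-g} for the $C^2$ case. Your added checks (local Ahlfors regularity at $x$ sufficing, and the automatic boundedness of $|\Delta_{\mu_{j_i},r}^{\dist}u(x)|$ for $C^2$ functions) are consistent with, though not spelled out in, the paper's argument.
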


\begin{proof}
	The first part follows from Corollary \ref{cor:intersection-ahlfors-regular-measures-nondecreasing}
	if we can show that the limits $\alpha_{i}(x)\df\lim_{r\to0^{+}}\frac{\meas_{j_i}(B_{r}(x)\cap M_{j_{i}})}{\meas(B_{r}(x))}$ for $1\le i \le t$
	exist and are all equal to $\frac{1}{t}$. This is true since for any such $i$, applying
	\eqref{eq:volume-extrinsic-ball} with $m\df m_{j_1}$ gives
	\begin{align*}
	\lim_{r\to0^{+}}\frac{\meas_{j_i}(B_{r}(x)\cap M_{j_{i}})}{\meas(B_{r}(x))} & =\lim_{r\to0^{+}}\frac{\omega_{m}r^{m}(1+O(r^{2}))}{\sum_{k=1}^{l}\omega_{m}r^{m}(1+O(r^{2}))}\\
	& =\lim_{r\to0^{+}}\frac{\omega_{m}r^{m}(1+O(r^{2}))}{\sum_{k=1}^{t}\omega_{m}r^{m}(1+O(r^{2}))+O(r^{m+1})}\\
	& =\lim_{r\to0^{+}}\frac{\omega_{m}r^{m}(1+O(r^{2}))}{t\omega_{m}r^{m}(1+O(r^2))}=\frac{1}{t} \, \cdot
	\end{align*}
	For $u\in C^{2}(\mathbb{R}^{n})$,
	\[
	\frac{1}{t}\sum_{i=1}^{t}\Delta_{\mathcal{H}^{m_{j_{i}}}\measrestr M_{j_{i}}}^{\dist}u(x)=\frac{1}{2(m+2)t}\sum_{i=1}^{t}\Delta_{g_{i}}u(x)
	\]
	by Proposition \ref{prop:MVL-equals-Delta-g}.
\end{proof}

\begin{rem}
Let us point out that Theorem \ref{thm:intersection-ahlfors-regular-measures} and Corollary \ref{cor:intersection-ahlfors-regular-measures-nondecreasing} also hold true if the Ahlfors regularity assumption on the measures $\mu_i$ is replaced by a pointwise version, namely
$$
c r^{Q_{j_i}(x)} \le \mu_{j_i}(B_r(x)) \le Cr^{Q_{j_i}(x)}
$$
for any $r>0$, any $i$ and any $x \in \cap_{i} C_{j_i}$, where $\{j_i\}_i \subseteq \{1,\ldots,k\}$ and the constants $c,C>0$ might depend on $j_i$ and $x$.
\end{rem}

Let us apply this remark in the following example.

\begin{example}\label{ex:example???}
Set $L\df[0,1]\times \{0\}$, $S\df[-1,0]\times [-1/2,1/2]$, and consider $(\setR^2,\dist_{e})$ equipped with the measures $\mu^{1}, \mu^{2}, \mu^{3}$, where $\mu^{i}\df\mu_1^{i} + \mu_2$ and $$\mu_2\df\di x \di y \measrestr S, \qquad  \mu_1^{i}\df x^{i-1}\di x \measrestr L$$ for any $i \in \{1,2,3\}$. Then $\mu_2$ is $2$-Ahlfors regular, while at the intersection point $o=(0,0)$, for any $r>0$, one has
$$
\mu_1^{1}(B_r(o))=r, \qquad \mu_1^{2}(B_r(o))=\frac{r^2}{2}, \qquad \mu_1^{3}(B_r(o))=\frac{r^3}{3}\, \cdot
$$

Let us focus on $\mu^{1}$. An immediate computation shows that $\mu^{1}(B_r(o))=(r+\pi r^2)/2$ holds for any $r>0$ small enough. Take $u \in C^2(\setR^2)$. The second order Taylor expansion with Laplace remainder of $u(\cdot,0)$ at $0$%, namely
%$$
%u(s,0)-u(0,0) = s \partial_x u(0,0) + \int_0^s \partial_{xx}^2u(t) (s-t) \di t,
%$$
\, implies
$$
\Delta_{\mu_1^{1},r}^{\dist}u(o) = \frac{2}{r^3(1+\pi r)} \left( \frac{r^2}{2} \partial_x u (o) + \int_0^r \partial_{xx}^2u(t) \frac{(r-t)^2}{2} \di t \right).
$$
On the other hand, applying Taylor's theorem to $u(\cdot, \cdot)$ at $o$, we know that for some $d>0$ and $C>0$,
$$
|u(x,y)-u(o)-Du(o)\cdot(x,y)|\le C\|(x,y)\|^2 
$$
holds for all $(x,y) \in B_d(o)$. Therefore, for any $0<r<d$, we get
\begin{align*}
|\Delta_{\mu_2,r}^{\dist}u(o)| & \le \frac{2}{r^3(1+\pi r)} \left( \iint_{S \cap B_r(o)} |Du(o)|\|(x,y)\|\di x \di y + C \iint_{S \cap B_r(o)}\|(x,y)\|^2 \di x \di y \right)\\
& \le \frac{2}{r^3(1+\pi r)}\left( r |Du(o)| + Cr^2\right) \underbrace{\haus^2(S \cap B_r(o))}_{=\pi r^2 /2} \le \frac{\pi(|Du(o)|+Cr)}{1 +\pi r} \, \cdot
\end{align*}
In particular, $r\mapsto |\Delta_{\mu_2,r}^{\dist}u(o)|$ is bounded in a neighborhood of $0$.

We thus deduce that $\Delta_{\meas^{1}}^{\dist}u(o)$ exists if and only if
$$
\partial_x u(o)=0 \quad \text{and}  \quad a\df\lim\limits_{r\to 0^+} r^{-3}\int_0^r \partial_{xx}^2u(t) (r-t)^2/2 \di t \, \, \text{exists in $\setR$},
$$
in which case $\Delta_{\meas^{1}}^{\dist}u(o) = \Delta_{\meas_{1}^{1}}^{\dist}u(o)=2a$.

Performing similar calculations for $\mu^{3}$ shows that $\Delta_{\meas^{3}}^{\dist}u(o)$ exists if and only if $\partial_x u(o)=0$, in which case we have $\Delta_{\meas^{3}}^{\dist}u(o)= \Delta_{\meas_{2}}^{\dist}u(o)$. Note that in this case the main contribution to the AMV Laplacian comes from the $2$-dimensional piece $S$, while in the previous case it was coming from the $1$-dimensional piece $L$. In fact, this example shows that the Hausdorff dimension of a piece does not matter when one computes the AMV Laplacian at an intersection point: what really matters is the so-to-say Ahlfors regular dimension of the measures.

Finally in the case of $\mu^{2}$, explicit computations show that $\Delta_{\meas^{2}}^{\dist_e}u(o)$ exists if and only if $\partial_x u(o)=0$, in which case
$$\Delta_{\meas^{2}}^{\dist_e}u(o) = (1+\pi)^{-1} \Delta_{\meas_{1}^{2}}^{\dist_e}u(o) + \pi (1+\pi)^{-1} \Delta_{\meas_{2}}^{\dist_e}u(o),$$
as expected from Corollary \ref{cor:intersection-ahlfors-regular-measures-nondecreasing}.
\end{example}

\section{Maximum and Comparison Principles}\label{sec:maximum}

In this section we introduce the notion of AMV sub- and superharmonic functions, and 
show that an upper semicontinuous AMV subharmonic function attains its maximum at the boundary. We recall that a metric space is called proper whenever all closed subsets are compact, in which case any u.s.c. function defined on the closure of a bounded domain attains its maximum.

\begin{defn}
	[Pointwise Upper and Lower AMV Laplacian] Let $(X,\dist,\meas)$
	be a metric measure space and $u:X\to\mathbb{\overline{R}}$. Then we define the \emph{upper AMV Laplacian}
	$\overbar{\Delta}_{\meas}^{\dist}u$ and \emph{lower AMV Laplacian} $\uunderbar{\Delta}_{\meas}^{\dist}u$
	respectively as
	\begin{align*}
	\overbar{\Delta}_{\meas}^{\dist} & u(x)\df\varlimsup_{r\to0^{+}}\frac{1}{r^{2}}\fint_{B_{r}(x)}u(y)-u(x)\di\meas(y),\\
	\uunderbar{\Delta}_{\meas}^{\dist} & u(x)\df\varliminf_{r\to0^{+}}\frac{1}{r^{2}}\fint_{B_{r}(x)}u(y)-u(x)\di\meas(y),
	\end{align*}
	for $\mu$-a.e.~$x \in X$.
\end{defn}
Accordingly, for $\Omega\subseteq X$, the function $u\in L^1_\text{loc}(X,\mu)$ is called pointwise 
\begin{itemize}
	\item upper AMV subharmonic in $\Omega$ if $\overbar{\Delta}_{\meas}^{\dist}u(x)\ge0$ holds for all $x\in\Omega$,
	\item lower AMV subharmonic in $\Omega$ if $\uunderbar{\Delta}_{\meas}^{\dist}u(x)\ge0$ holds for all $x\in\Omega$,
	\item upper AMV superharmonic in $\Omega$ if $\overbar{\Delta}_{\meas}^{\dist}u(x)\le0$ holds for all $x\in\Omega$,
	\item lower AMV superharmonic in $\Omega$ if $\uunderbar{\Delta}_{\meas}^{\dist}u(x)\le0$ holds for all $x\in\Omega$.
\end{itemize}
We also add the word \emph{strictly} whenever the inequalities involved are strict.

\begin{lem}[Maximum Principle for Strictly Upper AMV Subharmonic Functions]\label{lem:strictmaxprinc}
	Let $(X,\dist,\meas)$ be a proper metric measure space and $u\in L_{\text{loc}}^{1}(\Omega,\meas)$
	an u.s.c. function in $\overline{\Omega}\subseteq X$ such that
	$\overbar{\Delta}_{\meas}^{\dist}u>0$ in $\Omega$. Then
	\[
	\max_{\partial\Omega}u=\max_{\overline{\Omega}}u,
	\]
	and $u$ does not attain its maximum in $\Omega$.
\end{lem}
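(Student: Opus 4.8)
The plan is to prove this maximum principle by contradiction. Suppose, for the sake of contradiction, that $u$ attains its maximum over $\overline{\Omega}$ at some interior point $x_0 \in \Omega$, so that $u(x_0) = \max_{\overline{\Omega}} u$. The key idea is that at an interior maximum point, the averaged values of $u$ on small balls cannot exceed $u(x_0)$, which forces the upper AMV Laplacian to be nonpositive there — directly contradicting the hypothesis $\overbar{\Delta}_{\meas}^{\dist} u(x_0) > 0$.

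**First I would** invoke properness to guarantee the maximum is attained: since $X$ is proper and $u$ is u.s.c.\ on the compact set $\overline{\Omega}$, the supremum is achieved at some point of $\overline{\Omega}$. If every maximizer lies on $\partial\Omega$ we are done, so the contradiction hypothesis is that at least one maximizer $x_0$ lies in the open set $\Omega$. The central computation is then to estimate $\Delta_{\meas,r}^{\dist} u(x_0)$ from above for small $r$. Because $x_0$ is a global maximizer over $\overline{\Omega}$, we have $u(y) \le u(x_0)$ for every $y$ in $\overline{\Omega}$, and in particular for every $y \in B_r(x_0)$ once $r$ is small enough that $B_r(x_0) \subseteq \overline{\Omega}$ (using that $\Omega$ is open so such an $r$ exists). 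Hence the integrand $u(y) - u(x_0)$ is $\le 0$ throughout the ball, giving
\[
\Delta_{\meas,r}^{\dist} u(x_0) = \frac{1}{r^2} \fint_{B_r(x_0)} \big( u(y) - u(x_0) \big) \di \meas(y) \le 0
\]
for all such small $r$.

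**Then I would** pass to the limit superior: taking $\varlimsup_{r \to 0^+}$ on both sides yields $\overbar{\Delta}_{\meas}^{\dist} u(x_0) \le 0$, contradicting the assumption $\overbar{\Delta}_{\meas}^{\dist} u > 0$ in $\Omega$. This contradiction shows no maximizer can lie in $\Omega$, so every maximizer lies on $\partial\Omega$, which simultaneously proves both $\max_{\partial\Omega} u = \max_{\overline{\Omega}} u$ and the assertion that $u$ does not attain its maximum in the interior.

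**The main obstacle** is a subtle point hidden in the pointwise-representative convention: the term $u(x_0)$ inside the AMV Laplacian refers to the chosen representative value at $x_0$, and one must ensure the global-maximum inequality $u(y) \le u(x_0)$ genuinely holds $\meas$-a.e.\ on $B_r(x_0)$ rather than merely at $x_0$. Here upper semicontinuity is exactly what rescues the argument: u.s.c.\ functions attain their supremum on compacts and the pointwise inequality $u(y) \le \max_{\overline{\Omega}} u = u(x_0)$ holds for \emph{every} $y$, so no measure-theoretic exceptional set can spoil the sign of the integrand. I would make sure to state explicitly that $B_r(x_0) \subseteq \overline{\Omega}$ for small $r$ (so that the maximality bound applies to every point of the ball), and that $\meas(B_r(x_0)) > 0$ by the strict-positivity convention on the metric measure space, so the average is well defined.
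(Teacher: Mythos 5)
Your proposal is correct and follows essentially the same route as the paper: assume an interior maximizer exists, observe that the integrand $u(y)-u(x_0)$ is nonpositive on small balls so that $\Delta_{\meas,r}^{\dist}u(x_0)\le 0$, and contradict $\overbar{\Delta}_{\meas}^{\dist}u(x_0)>0$. The only cosmetic difference is that the paper first records that $u(x_0)$ is finite (it cannot be $+\infty$ lest $\Delta_{\meas,r}^{\dist}u(x_0)\equiv-\infty$, nor $-\infty$ by local integrability) before forming the difference $u(y)-u(x_0)$, a small point worth making explicit but not a gap in your argument.
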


\begin{proof}
	Let $C\df\{x\in\Omega:u(x)=\max_{\overline{\Omega}}u\}$. If $C$
	is empty we are done by the upper semicontinuity of $u$. Assume
	therefore that $C$ is nonempty and let $\max_{\overline{\Omega}}u=u(x)$
	for some $x\in C$. Note that $u(x)<\infty$ since otherwise $\Delta_{\meas,r}^{\dist}u(x)\equiv-\infty$
	for all $r>0$, which would contradict $\overbar{\Delta}_{\meas}^{\dist}u(x)>0$.
	Also, since $u\in L_{\text{loc}}^{1}(\Omega,\meas)$, $u(x)>-\infty$. Therefore
	$u(x)$ is finite and, for some $r$ small enough, we get the following
	contradiction,
	\[
	0<\Delta_{\meas,r}^{\dist}u(x)=\frac{1}{r^{2}}\fint_{B_{r}(x)}\underbrace{u(y)-u(x)}_{\le0}\di\meas(y)\le0.
	\]
	Therefore $C$ has to be empty.
\end{proof}

From this lemma we can deduce the weak maximum principle for pointwise upper AMV
subharmonic functions given the existence of a strictly
lower AMV subharmonic function.
\begin{thm}[Weak Maximum Principle for Upper AMV Subharmonic Functions] \label{thm:weak-maxprin-sub}
	Let $(X,\dist,\mu)$ be a proper metric measure space and
	$u\in L_{\text{loc}}^{1}(\Omega,\meas)$ be an u.s.c. function in $\overline{\Omega}\subseteq X$
	such that $\overbar{\Delta}_{\meas}^{\dist}u\ge0$ in
	$\Omega$. Assume that there exists a bounded function $\phi$
	which is u.s.c. in $\overline{\Omega}$ and such that $\uunderbar{\Delta}_{\meas}^{\dist}\phi>0$.
	Then $\max_{\partial\Omega}u=\max_{\overline{\Omega}}u$.
\end{thm}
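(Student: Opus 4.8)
The plan is to run a standard barrier argument, perturbing $u$ by a small multiple of $\phi$ so as to upgrade $\overbar{\Delta}_{\meas}^{\dist}u\ge0$ to a \emph{strict} inequality, invoke Lemma~\ref{lem:strictmaxprinc} on the perturbation, and then remove the perturbation by letting the parameter tend to zero. Throughout I use that for each fixed $r>0$ the operator $\Delta_{\meas,r}^{\dist}$ is linear in its argument, so that for every $\eps>0$ one has $\Delta_{\meas,r}^{\dist}(u+\eps\phi)(x)=\Delta_{\meas,r}^{\dist}u(x)+\eps\,\Delta_{\meas,r}^{\dist}\phi(x)$ at every $x$ and $r$ for which the relevant averages are defined.

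First I would fix $\eps>0$ and set $u_\eps\df u+\eps\phi$. Since $\phi$ is bounded it lies in $L^1_{\text{loc}}(\Omega,\meas)$, so $u_\eps\in L^1_{\text{loc}}(\Omega,\meas)$; and since a sum of u.s.c. functions is u.s.c. and $\eps>0$, the function $u_\eps$ is u.s.c. on $\overline{\Omega}$. The crucial point is that $u_\eps$ is strictly upper AMV subharmonic. Indeed, by linearity and the elementary superadditivity $\varlimsup_r(a_r+b_r)\ge\varlimsup_r a_r+\varliminf_r b_r$ (valid here because the right-hand side is never of the indeterminate form $\infty-\infty$, as $\overbar{\Delta}_{\meas}^{\dist}u(x)\ge0$ and $\uunderbar{\Delta}_{\meas}^{\dist}\phi(x)>0$), I obtain
\[
\overbar{\Delta}_{\meas}^{\dist}u_\eps(x)\ge\overbar{\Delta}_{\meas}^{\dist}u(x)+\eps\,\uunderbar{\Delta}_{\meas}^{\dist}\phi(x)\ge\eps\,\uunderbar{\Delta}_{\meas}^{\dist}\phi(x)>0
\]
for every $x\in\Omega$.

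With strict subharmonicity established, Lemma~\ref{lem:strictmaxprinc} applies to $u_\eps$ and yields $\max_{\partial\Omega}u_\eps=\max_{\overline{\Omega}}u_\eps$. Writing $M\df\sup_{\overline{\Omega}}|\phi|<\infty$ and using $u-\eps M\le u_\eps\le u+\eps M$ together with the trivial bound $\max_{\partial\Omega}u_\eps\le\max_{\partial\Omega}u+\eps M$, this gives
\[
\max_{\overline{\Omega}}u-\eps M\le\max_{\overline{\Omega}}u_\eps=\max_{\partial\Omega}u_\eps\le\max_{\partial\Omega}u+\eps M,
\]
hence $\max_{\overline{\Omega}}u\le\max_{\partial\Omega}u+2\eps M$. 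Letting $\eps\to0^{+}$ yields $\max_{\overline{\Omega}}u\le\max_{\partial\Omega}u$; since the reverse inequality is immediate from $\partial\Omega\subseteq\overline{\Omega}$, equality follows.

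The argument is essentially routine; the only genuine point requiring care is the inequality $\overbar{\Delta}_{\meas}^{\dist}(u+\eps\phi)\ge\overbar{\Delta}_{\meas}^{\dist}u+\eps\,\uunderbar{\Delta}_{\meas}^{\dist}\phi$, where one must justify the superadditivity of $\varlimsup$ and check that no $\infty-\infty$ ambiguity arises — which is exactly what the sign conditions $\overbar{\Delta}_{\meas}^{\dist}u\ge0$ and $\uunderbar{\Delta}_{\meas}^{\dist}\phi>0$ guarantee. It is worth noting that this is precisely why the hypothesis is phrased in terms of a strictly \emph{lower} AMV subharmonic barrier: it is the liminf $\uunderbar{\Delta}_{\meas}^{\dist}\phi$ that pairs with the limsup defining $\overbar{\Delta}_{\meas}^{\dist}u_\eps$ in the superadditivity inequality, so that the strict positivity is transferred to $u_\eps$.
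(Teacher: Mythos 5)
Your proof is correct and follows essentially the same route as the paper: perturb $u$ to $u_\eps=u+\eps\phi$, use superadditivity of the $\limsup$ to get strict upper AMV subharmonicity, and invoke Lemma~\ref{lem:strictmaxprinc}. The only (cosmetic) difference is that the paper fixes one $\eps$ with $\max_{\overline{\Omega}}u>\max_{\partial\Omega}u+2\eps M$ and derives a contradiction, whereas you let $\eps\to0^{+}$ directly; both are fine.
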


\begin{proof}
	Assume that $\max_{\partial\Omega}u<\max_{\overline{\Omega}}u=u(x)$
	for some $x\in\Omega$. By the same argument as in Lemma \ref{lem:strictmaxprinc},
	$u(x)$ is finite. Let $M\df\lVert\phi\rVert_{L^{\infty}(\overline{\Omega})}$
	and take $\epsilon>0$ such that $\max_{\overline{\Omega}}u>\max_{\partial\Omega}u+2\epsilon M$.
	In particular, this implies that
	\begin{equation}
	\max_{\overline{\Omega}}u+\epsilon\inf_{\overline{\Omega}}\phi>\max_{\partial\Omega}u+\epsilon\max_{\partial\Omega}\phi.\label{eq:int>bound}
	\end{equation}
	Now define $u_{\epsilon}\df u+\epsilon\phi$. Then, pointwise in
	$\Omega$,
	\[
	\overbar{\Delta}_{\meas}^{\dist}u_{\epsilon}\ge\overbar{\Delta}_{\meas}^{\dist}u+\epsilon\uunderbar{\Delta}_{\meas}^{\dist}\phi>0,
	\]
	and since $u_{\epsilon}$ is u.s.c. in $\overline{\Omega}$, Lemma
	\ref{lem:strictmaxprinc} implies that $\max_{\partial\Omega}u_{\epsilon}=\max_{\overline{\Omega}}u_{\epsilon}$.
	However, by \eqref{eq:int>bound},
	\begin{align*}
	\max_{\overline{\Omega}}u_{\epsilon} & \ge u_{\epsilon}(x)=\max_{\overline{\Omega}}u+\epsilon\phi(x)\\
	& \ge\max_{\overline{\Omega}}u+\epsilon\inf_{\overline{\Omega}}\phi\\
	& >\max_{\partial\Omega}u+\epsilon\max_{\Omega}\phi\\
	& \ge\max_{\partial\Omega}u_{\epsilon},
	\end{align*}
	a contradiction. Hence $\max_{\partial\Omega}u=\max_{\overline{\Omega}}u$.
\end{proof}

\begin{rem}
As an example, in the Heisenberg group, we could for instance choose $\phi(x)$ as the graded coordinate function $x_1^2$.
\end{rem}

\begin{rem}
	By considering the sign function $\sgn x$ defined as zero for $x=0$, the function $u(x)=\sgn x - \sgn (x-1)$ in $\Omega\df(-1,2)$ is AMV harmonic everywhere in $\Omega$ but the maximum principle is violated, showing that upper semicontinuity is a necessary condition.\footnote{This example was proposed to us by A. Kijowski.}
\end{rem}

\begin{cor}[Weak Minimum Principle for Lower AMV Superharmonic Functions] \label{cor:weak-minprin-super}
	Let $(X,\dist,\mu)$ be a proper metric measure space and $u\in L_{\text{loc}}^{1}(\Omega,\meas)$ be a l.s.c. function in $\overline{\Omega}\subseteq X$
	such that $\uunderbar{\Delta}_{\meas}^{\dist}u\le0$ in $\Omega$.
	Assume that there exists a bounded function $\phi$ which is u.s.c.
	in $\overline{\Omega}$ and such that $\uunderbar{\Delta}_{\meas}^{\dist}\phi>0$.
	Then $\min_{\partial\Omega}u=\min_{\overline{\Omega}}u$.
\end{cor}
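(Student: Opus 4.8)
The plan is to deduce this minimum principle directly from Theorem~\ref{thm:weak-maxprin-sub} via the standard duality $u \mapsto -u$, since negation interchanges lower and upper semicontinuity, interchanges the lower and upper AMV Laplacians, and turns minima into maxima. Thus $-u$ will satisfy the hypotheses of the already-proved maximum principle, and its conclusion will translate back into the desired statement for $u$.

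First I would set $v \df -u$. Since $u$ is l.s.c.\ in $\overline{\Omega}$, the function $v$ is u.s.c.\ in $\overline{\Omega}$, and clearly $v \in L_{\text{loc}}^{1}(\Omega,\meas)$. The key identity to record is that, for every admissible $x$,
\[
\overbar{\Delta}_{\meas}^{\dist} v(x) = \varlimsup_{r\to 0^{+}} \frac{1}{r^{2}}\fint_{B_{r}(x)} v(y)-v(x)\di\meas(y) = -\varliminf_{r\to 0^{+}}\frac{1}{r^{2}}\fint_{B_{r}(x)} u(y)-u(x)\di\meas(y) = -\uunderbar{\Delta}_{\meas}^{\dist} u(x),
\]
which uses only $\varlimsup(-a_{r}) = -\varliminf a_{r}$ applied to $a_{r} = r^{-2}\fint_{B_{r}(x)}(u(y)-u(x))\di\meas(y)$. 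Because $\uunderbar{\Delta}_{\meas}^{\dist} u \le 0$ in $\Omega$ by hypothesis, this gives $\overbar{\Delta}_{\meas}^{\dist} v \ge 0$ in $\Omega$.

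Next I would observe that the auxiliary function $\phi$ satisfies \emph{verbatim} the hypotheses required by Theorem~\ref{thm:weak-maxprin-sub}: it is bounded, u.s.c.\ in $\overline{\Omega}$, and $\uunderbar{\Delta}_{\meas}^{\dist}\phi > 0$. Applying that theorem to $v$ therefore yields $\max_{\partial\Omega} v = \max_{\overline{\Omega}} v$. Finally, translating back through $\max_{\partial\Omega}(-u) = -\min_{\partial\Omega} u$ and $\max_{\overline{\Omega}}(-u) = -\min_{\overline{\Omega}} u$ gives $\min_{\partial\Omega} u = \min_{\overline{\Omega}} u$, as claimed.

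There is no genuine obstacle here; the entire argument is a sign-flip reduction to the preceding theorem. The only point demanding care is the simultaneous bookkeeping of the two sign changes — that negation swaps $\varliminf$ with $\varlimsup$ (so that the lower AMV Laplacian of $u$ controls the upper AMV Laplacian of $-u$) while at the same time swapping l.s.c.\ with u.s.c.\ and $\min$ with $\max$ — so that the hypotheses of Theorem~\ref{thm:weak-maxprin-sub} are met by $v$ and its conclusion is exactly the desired statement about $u$.
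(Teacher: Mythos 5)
Your proof is correct and is essentially identical to the paper's own argument: both set $v\df -u$, use that negation turns l.s.c.\ into u.s.c.\ and $\overbar{\Delta}_{\meas}^{\dist}(-u)=-\uunderbar{\Delta}_{\meas}^{\dist}u\ge 0$, and then apply Theorem~\ref{thm:weak-maxprin-sub} to $v$. Your write-up just spells out the sign-flip bookkeeping in more detail.
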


\begin{proof}
	Let $v\df-u$. Then $v$ is u.s.c. in $\overline{\Omega}$ and $\overbar{\Delta}_{\meas}^{\dist}v=\overbar{\Delta}_{\meas}^{\dist}(-u)=-\uunderbar{\Delta}_{\meas}^{\dist}u\ge0$.
	Hence we can conclude by applying Theorem \ref{thm:weak-maxprin-sub}
	to $v$.
\end{proof}
From Theorem \ref{thm:weak-maxprin-sub} and Corollary \ref{cor:weak-minprin-super}
we can deduce that continuous AMV harmonic functions attain extremal
values at the boundary.
\begin{cor}[Weak Max and Minimum Principle for AMV Harmonic Functions]
	Let $(X,\dist,\mu)$ be a proper metric measure space and $u\in L_{\text{loc}}^{1}(\Omega,\mu)$
	be a continuous function in $\overline{\Omega}\subseteq X$ such that
	$\Delta_{\meas}^{\dist}u=0$ in $\Omega$. Assume that there
	exists a bounded function $\phi$ which is u.s.c. in $\overline{\Omega}$
	and such that $\uunderbar{\Delta}_{\meas}^{\dist}\phi>0$. Then
	$\max_{\partial\Omega}u=\max_{\overline{\Omega}}u$ and $\min_{\partial\Omega}u=\min_{\overline{\Omega}}u$.
\end{cor}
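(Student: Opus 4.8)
The plan is to deduce this corollary directly from Theorem \ref{thm:weak-maxprin-sub} and Corollary \ref{cor:weak-minprin-super}, the point being simply that AMV harmonicity is a two-sided condition that yields both the required subharmonic and superharmonic inequalities at once.

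First I would observe that the hypothesis $\Delta_\meas^\dist u = 0$ in $\Omega$ means that for every $x \in \Omega$ the limit defining the AMV Laplacian exists and vanishes. Consequently both the upper and lower AMV Laplacians agree with this limit, so that $\overbar{\Delta}_\meas^\dist u(x) = 0 \ge 0$ and $\uunderbar{\Delta}_\meas^\dist u(x) = 0 \le 0$ for all $x \in \Omega$. In other words, $u$ is simultaneously upper AMV subharmonic and lower AMV superharmonic in $\Omega$.

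Next I would exploit the regularity of $u$: since $u$ is continuous in $\overline{\Omega}$, it is in particular both u.s.c.\ and l.s.c.\ there. For the maximum principle, $u$ is u.s.c.\ with $\overbar{\Delta}_\meas^\dist u \ge 0$, so applying Theorem \ref{thm:weak-maxprin-sub} with the given auxiliary function $\phi$ yields $\max_{\partial\Omega} u = \max_{\overline{\Omega}} u$. For the minimum principle, $u$ is l.s.c.\ with $\uunderbar{\Delta}_\meas^\dist u \le 0$, so applying Corollary \ref{cor:weak-minprin-super} with the \emph{same} $\phi$ yields $\min_{\partial\Omega} u = \min_{\overline{\Omega}} u$. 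Crucially, the hypothesis imposed on $\phi$ is identical in both statements --- boundedness, upper semicontinuity on $\overline{\Omega}$, and $\uunderbar{\Delta}_\meas^\dist \phi > 0$ --- so a single choice of $\phi$ serves both conclusions at no extra cost.

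There is no genuine obstacle here: the entire content is already packaged in the two preceding results, and the only thing left to check is the elementary remark that a vanishing limit forces both one-sided limit inequalities, and that continuity supplies both semicontinuity hypotheses. The proof is therefore immediate.
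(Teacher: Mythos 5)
Your proposal is correct and is exactly the argument the paper intends: the corollary is stated as an immediate consequence of Theorem \ref{thm:weak-maxprin-sub} and Corollary \ref{cor:weak-minprin-super}, using that a vanishing AMV Laplacian forces $\overbar{\Delta}_{\meas}^{\dist}u=\uunderbar{\Delta}_{\meas}^{\dist}u=0$ and that continuity supplies both semicontinuity hypotheses. Your additional observation that the same auxiliary function $\phi$ serves both applications is accurate, since the hypotheses on $\phi$ in the two cited results are identical.
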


Because of the superadditivity of $\overbar{\Delta}_{\mu}^{\dist}$
we also get the following comparison principle.
\begin{cor}[Weak Comparison Principle]
	Let $(X,\dist,\mu)$ be a proper metric measure space and $u,v\in L_{\text{loc}}^{1}(\Omega,\meas)$ be l.s.c. and u.s.c. functions
	respectively in $\overline{\Omega}\subseteq X$ such that $u\ge v$ on $\partial\Omega$, and either
	\begin{enumerate}[label=(\roman*)]
		\item 	$\uunderbar{\Delta}_{\meas}^{\dist}u\le0$ and $\uunderbar{\Delta}_{\meas}^{\dist}v\ge0$ in $\Omega$, or
		\item $\overbar{\Delta}_{\meas}^{\dist}u\le0$ and $\overbar{\Delta}_{\meas}^{\dist}v\ge0$ in $\Omega$.
	\end{enumerate}
	Assume also that there exists a bounded
	function $\phi$ which is u.s.c. in $\overline{\Omega}$ and such
	that $\uunderbar{\Delta}_{\meas}^{\dist}\phi>0$. Then $u\ge v$
	in $\overline{\Omega}$.
\end{cor}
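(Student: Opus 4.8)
The plan is to reduce the statement to the weak maximum principle of Theorem~\ref{thm:weak-maxprin-sub} applied to the difference $w \df v - u$, exploiting the superadditivity of the upper AMV Laplacian. First I would observe that $w$ is u.s.c.\ in $\overline{\Omega}$: indeed $u$ is l.s.c., so $-u$ is u.s.c., and a sum of two u.s.c.\ functions is u.s.c.; likewise $w \in L^1_{\text{loc}}(\Omega,\meas)$ as a difference of two $L^1_{\text{loc}}$ functions. The boundary hypothesis $u \ge v$ on $\partial\Omega$ reads $w \le 0$ on $\partial\Omega$, so $\max_{\partial\Omega} w \le 0$.

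The heart of the argument is to verify that $\overbar{\Delta}_{\meas}^{\dist} w \ge 0$ throughout $\Omega$ under either hypothesis (i) or (ii). Since for each fixed $r>0$ the operator $\Delta_{\meas,r}^{\dist}$ is linear in its argument, $\Delta_{\meas,r}^{\dist} w = \Delta_{\meas,r}^{\dist} v - \Delta_{\meas,r}^{\dist} u$. I would then invoke the two elementary inequalities $\varlimsup_r(a_r + c_r) \ge \varlimsup_r a_r + \varliminf_r c_r$ and $\varlimsup_r(a_r + c_r) \ge \varliminf_r a_r + \varlimsup_r c_r$ (valid whenever the right-hand sides are not of the form $\infty-\infty$), with $a_r = \Delta_{\meas,r}^{\dist} v$ and $c_r = -\Delta_{\meas,r}^{\dist} u$. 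Using $\varliminf_r(-b_r) = -\varlimsup_r b_r$ and $\varlimsup_r(-b_r) = -\varliminf_r b_r$, these yield the two superadditivity estimates
\[
\overbar{\Delta}_{\meas}^{\dist}(v-u) \ge \overbar{\Delta}_{\meas}^{\dist} v - \overbar{\Delta}_{\meas}^{\dist} u
\qquad\text{and}\qquad
\overbar{\Delta}_{\meas}^{\dist}(v-u) \ge \uunderbar{\Delta}_{\meas}^{\dist} v - \uunderbar{\Delta}_{\meas}^{\dist} u .
\]
In case (ii) the first estimate gives $\overbar{\Delta}_{\meas}^{\dist} w \ge \overbar{\Delta}_{\meas}^{\dist} v - \overbar{\Delta}_{\meas}^{\dist} u \ge 0 - 0 = 0$, while in case (i) the second gives $\overbar{\Delta}_{\meas}^{\dist} w \ge \uunderbar{\Delta}_{\meas}^{\dist} v - \uunderbar{\Delta}_{\meas}^{\dist} u \ge 0 - 0 = 0$. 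Thus in both cases $w$ is upper AMV subharmonic in $\Omega$.

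Finally, since $w$ is u.s.c.\ in $\overline{\Omega}$, lies in $L^1_{\text{loc}}(\Omega,\meas)$, satisfies $\overbar{\Delta}_{\meas}^{\dist} w \ge 0$ in $\Omega$, and the hypothesized bounded u.s.c.\ function $\phi$ with $\uunderbar{\Delta}_{\meas}^{\dist}\phi > 0$ is at our disposal, Theorem~\ref{thm:weak-maxprin-sub} applies to $w$ and yields $\max_{\overline{\Omega}} w = \max_{\partial\Omega} w \le 0$. Hence $w = v - u \le 0$ on all of $\overline{\Omega}$, that is $u \ge v$ in $\overline{\Omega}$, as claimed.

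I would expect the only delicate point to be the bookkeeping in the superadditivity step --- specifically, selecting in each of the two cases the version of the $\varlimsup/\varliminf$ inequality that matches the available sign information, and checking that no $\infty - \infty$ indeterminacy arises (the possibility $w(x) = +\infty$ at an interior maximum being already ruled out by the argument in Lemma~\ref{lem:strictmaxprinc}, which underlies Theorem~\ref{thm:weak-maxprin-sub}).
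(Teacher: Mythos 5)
Your proof is correct and is essentially the paper's argument in mirror image: the paper applies the weak minimum principle (Corollary~\ref{cor:weak-minprin-super}) to $u-v$, which is itself just Theorem~\ref{thm:weak-maxprin-sub} applied to the negation, whereas you apply Theorem~\ref{thm:weak-maxprin-sub} directly to $v-u$ with the corresponding superadditivity estimates for $\overbar{\Delta}_{\meas}^{\dist}$. The bookkeeping of the $\varlimsup/\varliminf$ inequalities in the two cases matches what the paper does with $\uunderbar{\Delta}_{\meas}^{\dist}$.
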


\begin{proof}
	We assume \textit{(i)} and note that $w\df u-v$ is l.s.c.~in $\overline{\Omega}$ and $\uunderbar{\Delta}_{\meas}^{\dist}w\le\uunderbar{\Delta}_{\meas}^{\dist}u-\uunderbar{\Delta}_{\meas}^{\dist}v\le0$.
	Therefore $\min_{\overline{\Omega}}w=\min_{\partial\Omega}w\ge0$
	by Corollary \ref{cor:weak-minprin-super}, hence $u\ge v$ in $\overline{\Omega}$.
	
	If we instead assume \textit{(ii)}, we get $\uunderbar{\Delta}_{\meas}^{\dist}w\le\overbar{\Delta}_{\meas}^{\dist}u-\overbar{\Delta}_{\meas}^{\dist}v\le0$,
	and Corollary~\ref{cor:weak-minprin-super} again gives the conclusion. 
\end{proof}
\begin{rem}
	In particular, if $u$ and $v$ are continuous AMV harmonic functions
	such that $u\ge v$ on the boundary $\partial\Omega$, and $\phi$
	as described above exists, then $u\ge v$ in $\overline{\Omega}$.
\end{rem}

\section{Green-Type Identity and Weak AMV Laplacian}

%In this final section, we prove a Gauss-Green formula for the operators $\Delta_{\mu,r}^{\dist}$, $r>0$, acting on a suitable weighted $L^2$ space. We then use this formula to motivate a definition of the weak AMV Laplacian. 

A metric measure space $(X,\dist,\meas)$ being given, we set
$$
T_ru := \fint_{B_r(x)}u\di \mu
$$
for any $u \in L^1_{\text{loc}}(X,\mu)$, $r>0$ and $x \in X$. Let us provide a preliminary result.

\begin{lem}\label{lem:technical}
Let $(X,\dist,\mu)$ be a metric measure space. Set $w(x)\df\int_{B_r(x)}\frac{\di \mu(y)}{\mu(B_r(y))}$ for any $x \in X$. Then for any $1 \le p \le +\infty$ and $r>0$,
\begin{enumerate}
\item if $u \in L^p(X,w\mu)$ then $T_ru \in L^p(X,\mu)$ and
$$
\|T_r u\|_{L^p(X,\mu)} \le \|u\|_{L^p(X,w \mu)} \, ,
$$
\item if $u \in L^p(X,\mu) \cap L^p(X,w \mu)$ then $\Delta_{\mu,r}^{\dist}u \in L^p(X,\mu)$ and
$$
\|\Delta_{\mu,r}^{\dist}u\|_{L^p(X,\mu)} \le \frac{1}{r^2}(\|u\|_{L^p(X,\mu)} + \|u\|_{L^p(X,w\mu)})\, .
$$
\end{enumerate}
\end{lem}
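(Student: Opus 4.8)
The plan is to reduce both statements to the averaging operator $T_r$ together with a single application of Tonelli's theorem. The starting observation is the pointwise identity $\Delta_{\mu,r}^{\dist}u = r^{-2}(T_r u - u)$, which is immediate from the definitions. Granting (1), the triangle inequality in $L^p(X,\mu)$ then gives
\[
\|\Delta_{\mu,r}^{\dist}u\|_{L^p(X,\mu)} \le \frac{1}{r^2}\big(\|T_r u\|_{L^p(X,\mu)} + \|u\|_{L^p(X,\mu)}\big) \le \frac{1}{r^2}\big(\|u\|_{L^p(X,w\mu)} + \|u\|_{L^p(X,\mu)}\big),
\]
which is exactly (2); the hypothesis $u \in L^p(X,\mu)\cap L^p(X,w\mu)$ ensures both terms on the right are finite. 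So the whole content lies in proving (1).

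For (1) with $1 \le p < \infty$, I would first apply Jensen's inequality on each ball $B_r(x)$, using the probability measure $\mu(B_r(x))^{-1}\mu$ and the convex function $t\mapsto|t|^p$, to obtain the pointwise bound $|T_r u(x)|^p \le \fint_{B_r(x)}|u|^p\di\mu$. Integrating this against $\mu$ and writing the average out explicitly yields
\[
\|T_r u\|_{L^p(X,\mu)}^p \le \int_X \frac{1}{\mu(B_r(x))}\int_X \mathbf{1}_{B_r(x)}(y)\,|u(y)|^p\di\mu(y)\di\mu(x).
\]
The decisive step is the symmetry of the metric, namely $\mathbf{1}_{B_r(x)}(y) = \mathbf{1}_{\{\dist(x,y)<r\}} = \mathbf{1}_{B_r(y)}(x)$, which lets me exchange the order of integration by Tonelli (legitimate since the integrand is nonnegative). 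The inner integral then becomes $\int_{B_r(y)}\mu(B_r(x))^{-1}\di\mu(x) = w(y)$ by the very definition of $w$, so the right-hand side equals $\int_X |u(y)|^p w(y)\di\mu(y) = \|u\|_{L^p(X,w\mu)}^p$, as claimed. The case $p=1$ is subsumed in this argument, with Jensen becoming an equality.

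It remains to treat $p=\infty$ and to settle the measurability bookkeeping. Since $(X,\dist,\mu)$ is strictly positive, $\mu(B_r(x))>0$ for every $x$, so $y\mapsto\mu(B_r(y))^{-1}$ is strictly positive and therefore $w>0$ everywhere; consequently $w\mu$ and $\mu$ have the same null sets and $\|u\|_{L^\infty(X,w\mu)} = \|u\|_{L^\infty(X,\mu)}$. Hence $|u|\le\|u\|_{L^\infty(X,w\mu)}$ holds $\mu$-a.e., and bounding $|T_r u(x)|\le\fint_{B_r(x)}|u|\di\mu$ immediately gives $\|T_r u\|_{L^\infty(X,\mu)}\le\|u\|_{L^\infty(X,w\mu)}$. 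The only genuinely technical point, which I expect to be the main (if minor) obstacle, is ensuring that $x\mapsto\mu(B_r(x))$, and hence $T_r u$ and $w$, are $\mu$-measurable so that Tonelli applies; this is routine — for open balls $x\mapsto\mu(B_r(x))$ is lower semicontinuous, and $(x,y)\mapsto\mathbf{1}_{\{\dist(x,y)<r\}}$ is Borel since $\dist$ is continuous — and I would record it as a preliminary remark rather than belabor it.
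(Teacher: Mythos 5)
Your proposal is correct and follows essentially the same route as the paper's proof: Jensen's inequality on each ball, Fubini--Tonelli combined with the symmetry $\chi_{B_r(x)}(y)=\chi_{B_r(y)}(x)$ to recognize $w$, and the triangle inequality to deduce part (2) from part (1). The only differences are that you spell out the $p=\infty$ case (which the paper dismisses as immediate) and the measurability bookkeeping, both correctly.
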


\begin{proof}
Let us start with proving $\textit{1.}$ In case $p=+\infty$ the result is immediate. For $1\le p <+\infty$, if $u \in L^p(X,w\mu)$, Jensen's inequality with the convex function $t \mapsto |t|^p$ implies
\begin{align*}
\|T_r u\|_{L^p(X,\mu)}^p & \le \int_X \fint_{B_r(x)} |u(y)|^p \di \mu(y) \di \mu(x)\\
& = \int_X \int_X \frac{\chi_{B_r(x)}(y)}{\mu(B_r(x))}|u(y)|^p \di \mu(y) \di \mu(x).
\end{align*}
Applying the Fubini--Tonelli theorem and the simple observation that $\chi_{B_r(x)}(y) = \chi_{B_r(y)}(x)$, we get
\begin{align*}
\|T_r u\|_{L^p(X,\mu)}^p & \le \int_X |u(y)|^p \int_X \frac{\chi_{B_r(y)}(x)}{\mu(B_r(x))} \di \mu(x) \di \mu(y)\\
& = \int_X |u(y)|^p \int_{B_r(y)} \frac{1}{\mu(B_r(x))} \di \mu(x) \di \mu(y)\\
& = \int_X |u(y)|^p w(y) \di \mu(y)= \|u\|_{L^p(X,w\mu)}^p\,.
\end{align*}
This shows $\textit{1}$. That $\textit{2}.$ follows is straightforward: for any $1\le p \le +\infty$, if $u \in L^p(X,\mu) \cap L^p(X,w\mu)$, then
\begin{align*}
\|\Delta_{\mu,r}^{\dist}u\|_{L^p(X,\mu)} \le \frac{1}{r^2} \|T_r u - u\|_{L^p(X,\mu)}&\le \frac{1}{r^2}\left( \|T_r u\|_{L^p(X,\mu)} + \|u\|_{L^p(X,\mu)}\right)\\
& \le \frac{1}{r^2}\left( \|u\|_{L^p(X,w\mu)} + \|u\|_{L^p(X,\mu)}\right)
\end{align*}
due to $\textit{1.}$
\end{proof}

\begin{rem}
Lemma \ref{lem:technical} applies especially when $\mu$ is a doubling measure, in which case the condition $u\in L^p(X,w\mu)$ is superfluous. Explicit computations show that if $C_\mu$ is the doubling measure of $\mu$, then $\|T_r\|_{p\to p}\le C_\mu^2$ and $\|\Delta_{\mu,r}^{\dist}\|_{p\to p} \le (1+C_\mu^2)r^{-2}$. For the particular case of a $Q$-Ahlfors regular measure, one has $\|T_r\|_{p\to p}\le C/c$ and $\|\Delta_{\mu,r}^{\dist}\|_{p\to p} \le (1+C/c)r^{-2}$. Finally, in case $\mu$ is uniform, meaning that there exists $\omega>0$ and $Q\ge 0$ such that $\mu(B)=\omega r^Q$ for any ball $B$ with radius $r$, one has $\|T_r\|_{p\to p}\le 1$ and $\|\Delta_{\mu,r}^{\dist}\|_{p\to p} \le 2r^{-2}$.
\end{rem}

\begin{thm}[Green-Type Identity]\label{thm:GaussGreen}
Let $(X,\dist,\mu)$ be a metric measure space, and $w$ be as in Lemma \ref{lem:technical}. Then for any $u,v \in L^2(X,w\mu)\cap  L^2(X,\mu)$ and $r>0$,
\begin{align}\label{eq:GaussGreen}
\int_X v \Delta_{\mu,r}^{\dist} u - u \Delta_{\mu,r}^{\dist} v \di \mu & = \frac{1}{r^2} \int_X u(x) \fint_{B_r(x)} v(y) \left( \frac{\mu(B_r(x))}{\mu(B_r(y))} - 1\right) \di \mu(y) \di \mu(x) \nonumber \\
& = \frac{1}{r^2} \int_X u(x) \int_{B_r(x)} v(y) \left( \frac{1}{\mu(B_r(y))} - \frac{1}{\mu(B_r(x))}\right) \di \mu(y) \di \mu(x).
\end{align}
\end{thm}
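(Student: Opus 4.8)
The plan is to reduce the whole identity to the averaging operator $T_r$ and then to run the same Fubini--Tonelli argument that underlies Lemma \ref{lem:technical}, the only new ingredient being that the two sides of the pairing produce \emph{different} normalising denominators. First I would write $\Delta_{\mu,r}^{\dist}u = r^{-2}(T_r u - u)$ and likewise for $v$, so that
\[
\int_X v \Delta_{\mu,r}^{\dist} u - u \Delta_{\mu,r}^{\dist} v \di \mu = \frac{1}{r^2}\int_X \bigl( v\, T_r u - u\, T_r v \bigr)\di\mu,
\]
the diagonal terms $\pm uv$ cancelling pointwise. This isolates the only nontrivial object, namely $\int_X v\, T_r u \di\mu$.

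Next I would expand $T_r u(x) = \int_X \frac{\chi_{B_r(x)}(y)}{\mu(B_r(x))}\, u(y)\di\mu(y)$, swap the order of integration, and use the symmetry $\chi_{B_r(x)}(y) = \chi_{B_r(y)}(x)$. After relabelling the variables this gives
\[
\int_X v\, T_r u \di\mu = \int_X u(x)\int_{B_r(x)} \frac{v(y)}{\mu(B_r(y))}\di\mu(y)\di\mu(x).
\]
The crucial point is that, in contrast with the $L^p$-bound of Lemma \ref{lem:technical}, the normalising factor that now survives is $\mu(B_r(y))^{-1}$ rather than $\mu(B_r(x))^{-1}$; it is precisely this mismatch between the two denominators that generates the right-hand side. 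Subtracting the untouched expression $\int_X u\, T_r v \di\mu = \int_X u(x)\,\mu(B_r(x))^{-1}\int_{B_r(x)} v(y)\di\mu(y)\di\mu(x)$ and dividing by $r^2$ yields the second displayed line of the theorem; factoring $\mu(B_r(x))^{-1}$ back out to reconstitute the average $\fint$ then gives the first line.

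The main obstacle is legitimising the interchange of integration, and this is exactly where the hypotheses $u,v\in L^2(X,\mu)\cap L^2(X,w\mu)$ enter. I would bound the double integral by Cauchy--Schwarz and Lemma \ref{lem:technical}: since
\[
\int_X |v|\, T_r|u| \di\mu \le \|v\|_{L^2(X,\mu)}\,\|T_r|u|\|_{L^2(X,\mu)} \le \|v\|_{L^2(X,\mu)}\,\|u\|_{L^2(X,w\mu)} < \infty,
\]
and symmetrically with the roles of $u$ and $v$ exchanged, the integrand is absolutely integrable for the product measure. Fubini--Tonelli therefore applies, all the manipulations above are justified, and the remainder is mere bookkeeping.
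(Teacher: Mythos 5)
Your argument is correct and follows essentially the same route as the paper: both reduce the left-hand side to $r^{-2}\int_X (v\,T_r u - u\,T_r v)\di\mu$, then apply Fubini--Tonelli together with the symmetry $\chi_{B_r(x)}(y)=\chi_{B_r(y)}(x)$ and a relabelling of variables to expose the mismatch of normalising factors $\mu(B_r(y))^{-1}$ versus $\mu(B_r(x))^{-1}$. Your explicit Cauchy--Schwarz justification of the interchange of integration is a slightly more careful rendering of what the paper delegates to Lemma \ref{lem:technical}, but it is not a different proof.
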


\begin{proof}
By 2. in Lemma~\ref{lem:technical}, the integral on the left hand side exists. Moreover,
\begin{equation}\label{eq:ggdifference}
 r^{2}\int_{X}v\Delta_{\meas,r}^{\dist}u-u\Delta_{\meas,r}^{\dist}v\di\meas
= \int_{X}\fint_{B_{r}(x)}u(y)v(x)-v(y)u(x)\di\meas(y)\di\meas(x).
\end{equation}
Again by Fubini--Tonelli and the fact that $\chi_{B_r(x)}(y) = \chi_{B_r(y)}(x)$,
\begin{align*}
 \int_{X}\fint_{B_{r}(x)}u(y)v(x)\di\meas(y)\di\meas(x) & =\int_{X}\int_{X}\frac{\chi_{B_{r}(x)}(y)}{\meas(B_{r}(x))}u(y)v(x)\di\meas(y)\di\meas(x)\\
 & =\int_{X}\int_{X}\frac{\chi_{B_{r}(x)}(y)}{\meas(B_{r}(x))}u(y)v(x)\di\meas(x)\di\meas(y)\\
 & =\int_{X}\int_{X}\frac{\chi_{B_{r}(y)}(x)}{\meas(B_{r}(x))}u(y)v(x)\di\meas(x)\di\meas(y).
\end{align*}
By relabeling $x$ to $y$ and vice versa in the final expression above, we find that
\begin{align*}
\int_{X}\int_{X}\frac{\chi_{B_{r}(y)}(x)}{\meas(B_{r}(x))}u(y)v(x)\di\meas(x)\di\meas(y)&=\int_{X}\int_{X}\frac{\chi_{B_{r}(x)}(y)}{\meas(B_{r}(y))}u(x)v(y)\di\meas(y)\di\meas(x) \\
&=\int_{X}\fint_{B_{r}(x)}\frac{\meas(B_{r}(x))}{\meas(B_{r}(y))}u(x)v(y)\di\meas(y)\di\meas(x).
\end{align*}
By putting this in \eqref{eq:ggdifference}, we conclude that
\begin{align*}
r^{2}\int_{X}v\Delta_{\meas,r}^{\dist}u-u\Delta_{\meas,r}^{\dist}v\di\meas & =\int_{X}\fint_{B_{r}(x)}u(x)v(y)\left(\frac{\meas(B_{r}(x))}{\meas(B_{r}(y))}-1\right)\di\meas(y)\di\meas(x)\\
 & =\int_{X}\int_{B_{r}(x)}u(x)v(y)\left(\frac{1}{\meas(B_{r}(y))}-\frac{1}{\meas(B_{r}(x))}\right)\di\meas(y)\di\meas(x).
\end{align*}
\end{proof}

\begin{rem}
The theorem shows that $\Delta_{\meas,r}^{\dist}$ in general is not self-adjoint on $L^2(X,\mu)$. However for measures such that $\mu(B_r(x))/\mu(B_r(y)) = 1 + o(r^2)$ uniformly, in particular uniform measures, the right-hand side of \eqref{eq:GaussGreen} is zero. This can be compared to the result of Burago et al. \cite{MR3990939} where they show that $\Delta_{\meas,r}^{\dist}$ is self-adjoint on $L^2(X,\phi\mu)$, where $\phi(x)\df r^2 \meas(B_r(x))$.
\end{rem}

If one seeks for a weak definition of the AMV Laplacian, Theorem \ref{thm:GaussGreen} suggests to avoid ``differentiation'' of the test functions as one would naturally do. Indeed, in \eqref{eq:GaussGreen}, there is no a priori reason to get $\int_X v \Delta_{\mu,r}^{\dist} u - u \Delta_{\mu,r}^{\dist} v \di \mu \to 0$ when $r\to 0^+$. Moreover, the regularity of test functions in metric spaces do not guarantee the existence of the AMV Laplacian in any case. Therefore we propose the following definition.

\begin{defn}
	[Weak AMV Laplacian]\label{def:weakAMV} Let $(X,\dist,\meas)$ be a metric measure
	space.
	We say that a Borel measure $\nu$ on $X$ is the \emph{weak AMV Laplacian} of a $\mu$-measurable function $u:X\to\mathbb{\overline{R}}$, and we denote it by $\Delta_{\meas}^{\dist}u\stackrel{{\rm w}}{=}\nu$, if
	\[
	\lim_{r\to0^{+}}\int_{X}\phi(x)\Delta_{\meas,r}^{\dist}u(x)\di\meas(x)=\int_{X}\phi(x)\di\nu(x)
	\]
	holds for all $\phi\in C_{c}(X)$. When $\Delta_{\meas}^{\dist}u\stackrel{{\rm w}}{=}0$ we say that $u$ is \emph{weakly AMV harmonic}.
\end{defn}

Note that a function
	which is pointwise AMV harmonic might not be weakly AMV harmonic. For instance,
	for $(X,\dist,\meas)=(\mathbb{R},\dist_e,\mathcal{L}^1)$, the sign function $u(x)=\sgn x$ defined as zero at the origin
%\[
%u(x)\df
%\begin{cases}
%1 & \text{if $x>0$,}\\
%0 & \text{if $x=0$,}\\
%-1 & \text{if $x<0$}
%\end{cases}
%\]
satisfies $\Delta_{\meas}^{\dist}u=0$ everywhere in $\setR$,
	but a straightforward computation shows that
	\[
	\Delta_{\meas}^{\dist}u\stackrel{{\rm w}}{=}\frac{2\delta'}{6},
	\]
 which coincides with the distributional
	Laplacian of $u$ divided by the dimensional constant $2(n+2)$ for $n=1$.

%We can also say that $u$ is wAMV subharmonic in $\Omega$ if $\uunderbar{\Delta}_{\meas}^{\dist}u\stackrel{{\rm w}}{\ge}0$,
%and wAMV superharmonic in $\Omega$ if $\overbar{\Delta}_{\meas}^{\dist}u\stackrel{{\rm w}}{\le}0$,
%considering only nonnegative test functions $\varphi\in C_{c}(\Omega)$.

\bibliographystyle{abbrvamsalpha}
\bibliography{References}

\providecommand{\bysame}{\leavevmode\hbox to3em{\hrulefill}\thinspace}
\providecommand{\MR}{\relax\ifhmode\unskip\space\fi MR }
% \MRhref is called by the amsart/book/proc definition of \MR.
\providecommand{\MRhref}[2]{%
  \href{http://www.ams.org/mathscinet-getitem?mr=#1}{#2}
}
\providecommand{\href}[2]{#2}
\begin{thebibliography}{ABGLBO19}

\bibitem[ABGLBO19]{MR3985395}
G.~Alberti, G.~Buttazzo, S.~Guarino Lo~Bianco, and E.~Oudet, \emph{Optimal
  reinforcing networks for elastic membranes}, Netw. Heterog. Media \textbf{14}
  (2019), no.~3, 589--615.

\bibitem[AGG19]{MR3896674}
T.~Adamowicz, M.~Gaczkowski, and P.~G\'{o}rka, \emph{Harmonic functions on
  metric measure spaces}, Rev. Mat. Complut. \textbf{32} (2019), no.~1,
  141--186.

\bibitem[AKS]{aks}
T.~Adamowicz, A.~Kijowski, and T.~Soultanis, \emph{in preparation}.

\bibitem[BIK19]{MR3990939}
D.~Burago, S.~Ivanov, and Y.~Kurylev, \emph{Spectral stability of
  metric-measure {L}aplacians}, Israel J. Math. \textbf{232} (2019), no.~1,
  125--158.

\bibitem[BLU07]{MR2363343}
A.~Bonfiglioli, E.~Lanconelli, and F.~Uguzzoni, \emph{Stratified {L}ie groups
  and potential theory for their sub-{L}aplacians}, Springer Monographs in
  Mathematics, Springer, Berlin, 2007.

\bibitem[Bos65]{MR177128}
A.~K. Bose, \emph{Functions satisfying a weighted average property}, Trans.
  Amer. Math. Soc. \textbf{118} (1965), 472--487.

\bibitem[FLM14]{MR3177660}
F.~Ferrari, Q.~Liu, and J.~Manfredi, \emph{On the characterization of
  {$p$}-harmonic functions on the {H}eisenberg group by mean value properties},
  Discrete Contin. Dyn. Syst. \textbf{34} (2014), no.~7, 2779--2793.

\bibitem[Gau40]{gauss}
C.~F. Gauss, \emph{Allgemeine lehrs{\"a}tze in beziehung auf die verkehrten
  verh{\"a}ltnisse des quadrats der entfernung wirkenden anziehungs- und
  abstossungs-kr{\"a}fte}, Weidmannschen Buchhandlung (1840).

\bibitem[GG09]{MR2545982}
M.~Gaczkowski and P.~G\'{o}rka, \emph{Harmonic functions on metric measure
  spaces: convergence and compactness}, Potential Anal. \textbf{31} (2009),
  no.~3, 203--214.

\bibitem[Gra73]{MR0339002}
A.~Gray, \emph{The volume of a small geodesic ball of a {R}iemannian manifold},
  Michigan Math. J. \textbf{20} (1973), 329--344.

\bibitem[Kel34]{kellogg}
O.~D. Kellogg, \emph{Converses of {G}auss' theorem on the arithmetic mean},
  Trans. Amer. Math. Soc. \textbf{36} (1934), no.~2, 227--242.

\bibitem[Kij18]{kijowski}
A.~Kijowski, \emph{Characterization of mean value harmonic functions on norm
  induced metric measure spaces with weighted {L}ebesgue measure}, ArXiV
  preprint: 1804.10005v2 (2018).

\bibitem[Koe06]{koebe}
P.~Koebe, \emph{Herleitung der partiellen differentialgleichungen der
  potentialfunktion aus deren integraleigenschaft}, Sitzungsber. Berlin. Math.
  Gessellschaft \textbf{5} (1906), 39--42.

\bibitem[KP89]{MR967797}
L.~Karp and M.~Pinsky, \emph{Volume of a small extrinsic ball in a
  submanifold}, Bull. London Math. Soc. \textbf{21} (1989), no.~1, 87--92.

\bibitem[Llo15]{MR3299033}
J.~G. Llorente, \emph{Mean value properties and unique continuation}, Commun.
  Pure Appl. Anal. \textbf{14} (2015), no.~1, 185--199.

\bibitem[MPR10]{MR2566554}
J.~J. Manfredi, M.~Parviainen, and J.~D. Rossi, \emph{An asymptotic mean value
  characterization for {$p$}-harmonic functions}, Proc. Amer. Math. Soc.
  \textbf{138} (2010), no.~3, 881--889. \MR{2566554}

\bibitem[NV94]{MR1321628}
I.~Netuka and J.~Vesel\'{y}, \emph{Mean value property and harmonic functions},
  Classical and modern potential theory and applications ({C}hateau de {B}onas,
  1993), NATO Adv. Sci. Inst. Ser. C Math. Phys. Sci., vol. 430, Kluwer Acad.
  Publ., Dordrecht, 1994, pp.~359--398.

\bibitem[Vol09]{volterra}
V.~Volterra, \emph{Alcune osservazioni sopra propriet{\`a} atte ad individuare
  una funzione}, Rend. Accad. d. Lincei Roma \textbf{18} (1909), no.~5,
  263--266.

\bibitem[WG82]{MR677493}
T.~J. Willmore and A.~Gray, \emph{Mean-value theorems for {R}iemannian
  manifolds}, Proc. Roy. Soc. Edinburgh Sect. A \textbf{92} (1982), no.~3-4,
  343--364.

\end{thebibliography}

\end{document}